\newtheorem{theorem}{Theorem}
\newtheorem{proposition}[theorem]{Proposition}
  \newtheorem{example}[theorem]{Example}
\newenvironment{proof}{\noindent\textit{Proof.}}
{\QED\vskip\theorempostskipamount} 
\def\petitcarre{\vrule height4pt width 4pt depth0pt}
\def\QED{\relax\ifmmode\eqno{\hbox{\petitcarre}}\else{%
  \unskip\nobreak\hfil\penalty50\hskip2em\hbox{}\nobreak\hfil
  \petitcarre
  \parfillskip=0pt \finalhyphendemerits=0\par\smallskip}
  \fi}
\newcommand\RR{\mathcal{R}}
\newcommand{\N}{\mathbb{N}}
\newcommand{\Z}{\mathbb{Z}}
\newcommand{\R}{\mathbb{R}}
\newcommand{\GT}{\mathbb{T}}
\def\un(#1){\underline{#1}\,}
\DeclareMathOperator{\Card}{Card}
\DeclareMathOperator{\Sep}{Sep}
\definecolor{ivoire}{rgb}{0.99,0.99,0.8}
\newcounter{hours}\newcounter{minutes}
\newcommand\computetime{\setcounter{hours}{\time/60}%
  \setcounter{minutes}{\time-\value{hours}*60}%
  \thehours\,h\,\theminutes}
\newcommand\dateandtime{\today\quad\computetime}
\numberwithin{theorem}{section}
\numberwithin{equation}{section}
\numberwithin{figure}{section}
\numberwithin{table}{section}
\title{Bifix codes and interval exchanges}
\author{Val\'erie Berth\'e$^1$, Clelia De Felice$^2$, 
Francesco Dolce$^3$, Julien Leroy$^4$,\\
 Dominique Perrin$^3$,
Christophe  Reutenauer$^5$,
Giuseppina Rindone$^3$\\\\
$^1$CNRS, Universit\'e Paris 7,
$^2$Universit\`a degli Studi di Salerno,\\
$^3$Universit\'e Paris Est, LIGM,
$^4$Universit\'e du Luxembourg,\\
 $^5$Universit\'e du Qu\'ebec \`a Montr\'eal}
\date{\dateandtime}
\begin{document}
\makeatletter
\def\@listI{%
  \leftmargin\leftmargini
  \setlength{\parsep}{0pt plus 1pt minus 1pt}
  \setlength{\topsep}{2pt plus 1pt minus 1pt}
  \setlength{\itemsep}{0pt}
}
\let\@listi\@listI
\@listi
\def\@listii {%
  \leftmargin\leftmarginii
  \labelwidth\leftmarginii
  \advance\labelwidth-\labelsep
  \setlength{\topsep}{0pt plus 1pt minus 1pt}
}
\def\@listiii{%
  \leftmargin\leftmarginiii
  \labelwidth\leftmarginiii
  \advance\labelwidth-\labelsep
  \setlength{\topsep}{0pt plus 1pt minus 1pt}
  \setlength{\parsep}{0pt} 
  \setlength{\partopsep}{1pt plus 0pt minus 1pt}
}
\makeatother
\maketitle

\begin{abstract}
We investigate the relation between bifix codes and interval exchange
transformations. We prove that the class of natural codings of regular 
interval echange transformations is closed under maximal bifix decoding.
\end{abstract}
\tableofcontents
\section{Introduction}
This paper is part of a research initiated in~\cite{BerstelDeFelicePerrinReutenauerRindone2012} which studies the connections between the three 
subjects formed by symbolic dynamics, the theory of codes and
combinatorial group theory. The initial focus was placed
on the classical case of Sturmian systems and progressively extended
to more general cases.

The starting point of the present research is the observation that
the family of
Sturmian sets is not closed under decoding by
a maximal bifix code, even in the more simple case of
the code formed of all words of fixed length $n$.
Actually, the decoding of the Fibonacci word (which 
 corresponds to a rotation of
angle $\alpha=(3-\sqrt{5})/2$) by blocks
of length $n$ is an interval exchange transformation
corresponding to a rotation of angle $n\alpha$
coded on $n+1$ intervals.
This has lead us to consider the set of factors
of interval exchange transformations, called interval
exchange sets.
Interval exchange transformations 
were introduced by Oseledec~\cite{Oseledec1966}
following
an earlier idea of Arnold~\cite{Arnold1963}. These transformations form a generalization
of rotations of the circle.

The main result in this paper is that
the family of regular interval exchange sets is closed
under decoding by a maximal bifix code
(Theorem~\ref{corollaryInverseImage}).
This result invited us to try to extend to regular interval
exchange transformations the results relating bifix codes
and Sturmian words. This lead us to generalize
in~\cite{BertheDeFeliceDolceLeroyPerrinReutenauerRindone2014}  
to a large class of sets 
the main result of~\cite{BerstelDeFelicePerrinReutenauerRindone2012},
namely 
the Finite Index Basis Theorem relating maximal bifix codes and
bases of subgroups of finite index of the free group.

Theorem~\ref{corollaryInverseImage} reveals a close connection between
maximal bifix codes and interval exchange transformations. Indeed,
given an interval exchange transformation $T$ each maximal bifix
code $X$ defines a new interval exchange transformation $T_X$.
We show at the end of the paper, using the Finite Index Basis Theorem,
that this transformation is actually an interval exchange transformation
on a stack, as defined in~\cite{BoshernitzanCarroll1997}
(see also~\cite{Veech1975}).

The paper is organized as follows.

In Section~\ref{sectionIntervalExchange}, we recall some notions
concerning interval exchange transformations. We state the result
of Keane~\cite{Keane1975} which proves that regularity is
 a sufficient condition for the minimality
of such a transformation (Theorem~\ref{theoremKeane}).

We study in Section~\ref{sectionBifixCodes} the relation between
interval exchange transformations and bifix codes.
We prove that the transformation associated with a finite $S$-maximal
bifix code is an interval exchange transformation
(Proposition~\ref{propositionTransformation}). We also prove
a result concerning the regularity of
 this transformation (Theorem~\ref{theoremMinimal}).

 We discuss the relation
with bifix codes and we show that the class of regular
interval exchange sets is closed under decoding by a maximal bifix
code, that is, under inverse images
by coding morphisms of finite
maximal bifix codes (Theorem~\ref{corollaryInverseImage}).

In Section~\ref{sectionTreePlanarTree} we introduce tree sets
and planar tree sets. We show, reformulating
 a theorem of \cite{FerencziZamboni2008},
that uniformly recurrent planar tree sets are the regular interval
exchange sets (Theorem~\ref{theoremFZ}). We show in another
paper~\cite{BertheDeFeliceDolceLeroyPerrinReutenauerRindone2013c}
that, in the same way as regular interval exchange sets,
 the class of uniformly recurrent tree sets is closed under
maximal bifix decoding.

In Section~\ref{sectionExchangePieces}, we explore a new direction,
extending  the results
of this paper to a more general case. We introduce exchange of pieces,
a notable example being given by the Rauzy fractal. We
indicate how the decoding of the natural codings of exchange of pieces by maximal bifix
codes are again natural codings of exchange of pieces. We finally give in Section~\ref{sectionSkew}
an alternative
proof of Theorem~\ref{corollaryInverseImage} using a skew product
of a regular interval exchange transformation with a finite permutation group.

\paragraph{Acknowledgements} This work was supported by grants from
R\'egion Ile-de-France, the ANR projects Eqinocs
and Dyna3S, the Labex Bezout,
the FARB Project
``Aspetti algebrici e computazionali nella teoria dei codici,
degli automi e dei linguaggi formali'' (University of Salerno, 2013)
and the MIUR PRIN 2010-2011 grant
``Automata and Formal Languages: Mathematical and Applicative Aspects''.
 We thank the referee
for his useful remarks on the first version of the paper which was initially
part of the companion 
paper~\cite{BertheDeFeliceDolceLeroyPerrinReutenauerRindone2014}.

\section{Interval exchange transformations}\label{sectionIntervalExchange}
Let us recall the definition of an interval exchange transformation
(see~\cite{CornfeldFominSinai1982} or~\cite{BertheRigo2010}).

A \emph{semi-interval} is a nonempty subset of the real line of the
form $[\alpha,\beta[=\{z\in\R\mid \alpha\le z<\beta\}$. Thus it
is a left-closed and right-open interval. For two semi-intervals
$\Delta,\Gamma$, we denote $\Delta<\Gamma$ if $x<y$
for any $x\in\Delta$ and $y\in\Gamma$. 

Let $(A,<)$ be an ordered set. A partition $(I_a)_{a\in A}$
of $[0,1[$ in semi-intervals is \emph{ordered}  if $a<b$
implies $I_a<I_b$.

Let $A$ be a finite set ordered by two total orders $<_1$ and $<_2$. Let 
$(I_a)_{a\in A}$ be  a partition of $[0,1[$ in semi-intervals ordered
  for $<_1$.
Let $\lambda_a$ be the length of $I_a$.
Let $\mu_a=\sum_{b\le_1 a}\lambda_b$ and $\nu_a=\sum_{b\le_2
  a}\lambda_b$.
Set $\alpha_a=\nu_a-\mu_a$.
The \emph{interval exchange transformation} relative to 
$(I_a)_{a\in A}$ is the  map $T:[0,1[\rightarrow [0,1[$ defined by
\begin{displaymath}
 T(z)=z+\alpha_a\quad \text{ if } z\in I_a.
\end{displaymath}
Observe that the restriction of $T$ to $I_a$ is a translation
onto $J_a=T(I_a)$, that $\mu_a$ is the right boundary of $I_a$
and that $\nu_a$ is the right boundary of $J_a$.
We additionally denote
 by $\gamma_a$ the left boundary of $I_a$ and
by $\delta_a$ the left boundary of $J_a$. Thus
\begin{displaymath}
I_a=[\gamma_a,\mu_a[,\quad J_a=[\delta_a,\nu_a[.
\end{displaymath}

Note that $a<_2 b$ implies $\nu_a<\nu_b$ and thus $J_a< J_b$. This
shows that the family
$(J_a)_{a\in A}$ is a partition of $[0,1[$ ordered for $<_2$.
In particular,
the transformation $T$ defines a bijection from $[0,1[$ onto itself.

An interval exchange transformation relative to 
$(I_a)_{a\in A}$ is also said to be on the alphabet $A$.
The values $(\alpha_a)_{a\in A}$ are called the \emph{translation
  values}
of the transformation $T$.
\begin{example}\label{exampleRotation}
 Let $R$ be the interval
exchange
transformation corresponding to $A=\{a,b\}$, $a<_1b$, $b<_2a$,
$I_a=[0,1-\alpha[$,
$I_b=[1-\alpha,1[$.
The transformation $R$ is the rotation  of angle $\alpha$ on the semi-interval $[0,1[$ 
defined by $R(z)=z+\alpha\bmod 1$.
\end{example}
Since $<_1$ and $<_2$ are total
orders, there exists a unique permutation $\pi$ of $A$
such that $a<_1b$ if and only if $\pi(a)<_2\pi(b)$.
Conversely, $<_2$ is determined by $<_1$ and $\pi$
and $<_1$ is determined by $<_2$ and $\pi$.
The permutation $\pi$ is said to be \emph{associated} with $T$.

If we set $A=\{a_1,a_2,\ldots,a_s\}$ with $a_1<_1a_2<_1\cdots<_1a_s$,
  the pair $(\lambda,\pi)$ formed by the family
$\lambda=(\lambda_a)_{a\in A}$ and the permutation
$\pi$
determines the map $T$. We will also denote $T$ as $T_{\lambda,\pi}$.
The transformation $T$ is also said to be an $s$-interval exchange
transformation.

It is easy to verify that if $T$ is an interval exchange
transformation,
then $T^n$ is also an interval exchange transformation for
any $n\in\Z$.

\begin{example}
A $3$-interval exchange transformation is represented in
Figure~\ref{figure3interval1}. One has $A=\{a,b,c\}$ with
$a<_1b<_1c$ and $b<_2c<_2a$.
The associated permutation is the cycle $\pi=(abc)$.
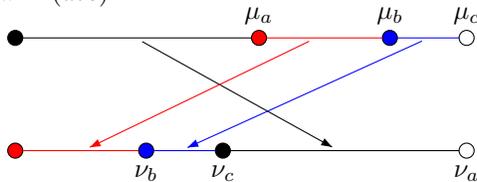
\begin{figure}[hbt]
\centering
\gasset{Nadjust=wh,AHnb=0}
\begin{picture}(60,20)(0,-5)

\node[fillcolor=black](0H)(0,15){}
\node[Nframe=n](1HH)(32.4,18){$\mu_a$}\node[fillcolor=red](1H)(32.4,15){}
\node[Nframe=n](2HH)(49.8,18){$\mu_b$}\node[fillcolor=blue](2H)(49.8,15){}
\node[Nframe=n](3HH)(60,18){$\mu_c$}\node(3H)(60,15){}
\node[Nframe=n](05H)(16,15){}\node[Nframe=n](15H)(40,15){}\node[Nframe=n](25H)(55,15){}
\node[fillcolor=red](0B)(0,0){}
\node[Nframe=n](1BB)(17.4,-3){$\nu_b$}\node[fillcolor=blue](1B)(17.4,0){}
\node[Nframe=n](2BB)(27.6,-3){$\nu_c$}\node[fillcolor=black](2B)(27.6,0){}
\node[Nframe=n](3BB)(60,-3){$\nu_a$}\node(3B)(60,0){}
\node[Nframe=n](05B)(9,0){}\node[Nframe=n](15B)(22,0){}\node[Nframe=n](25B)(43,0){}

\drawedge(0H,1H){}
\drawedge[linecolor=red](1H,2H){}
\drawedge[linecolor=blue](2H,3H){}
\drawedge[linecolor=red](0B,1B){}\drawedge[linecolor=blue](1B,2B){}\drawedge(2B,3B){}
\drawedge[AHnb=1](05H,25B){}\drawedge[AHnb=1,linecolor=red](15H,05B){}
\drawedge[AHnb=1,linecolor=blue](25H,15B){}
\end{picture}
\caption{A $3$-interval exchange transformation.}\label{figure3interval1}
\end{figure}
\end{example}
\subsection{Regular interval exchange transformations}

The \emph{orbit} of a point $z\in[0,1[$ is the set $\{T^n(z)\mid
  n\in\Z \}$. 
The transformation $T$ is said to be \emph{minimal} if, for any
  $z\in[0,1[$,
the  orbit of $z$ is dense in $[0,1[$.

Set $A=\{a_1,a_2,\ldots,a_s\}$ with $a_1<_1 a_2<_1\ldots<_1 a_s$,
 $\mu_i=\mu_{a_i}$ and $\delta_i=\delta_{a_i}$. The points $0,\mu_1,\ldots,\mu_{s-1}$
form the set of \emph{separation points} of $T$, denoted $\Sep(T)$.
Note that the singular points of the transformation $T$ (that is
the points $z\in[0,1[$ at which $T$ is not continuous) are
among the separation points but that the converse is not true in
general (see Example~\ref{exampleBifixDegree2}).

An interval exchange transformation $T_{\lambda,\pi}$ is called
\emph{regular} if the orbits of the nonzero separation
points $\mu_1,\ldots,\mu_{s-1}$ are infinite and disjoint.
Note that the orbit of $0$ cannot be disjoint of the others
since one has $T(\mu_i)=0$ for some $i$ with $1\le i\le s-1$.
The term regular was introduced by Rauzy in~\cite{Rauzy1979}.
A regular interval exchange transformation is also said
to be \emph{without connections} or to
satisfy the \emph{idoc} condition (where idoc stands for infinite
disjoint orbit condition).

Note that  since $\delta_2=T(\mu_1),\ldots,\delta_s=T(\mu_{s-1})$,
$T$ is regular if and only if the orbits of $\delta_2,\ldots,\delta_s$
are infinite and disjoint.

As an example, the $2$-interval exchange transformation
of Example~\ref{exampleRotation}
which is the rotation of angle $\alpha$ is regular
if and only if $\alpha$ is irrational.

Note that if $T$ is a regular $s$-interval exchange transformation,
then for any $n\ge 1$, the transformation $T^n$ is an
$n(s-1)+1$-interval exchange transformation. Indeed,
the points $T^i(\mu_j)$ for $0\le i\le n-1$ and $1\le j\le s-1$
are distinct and define a partition in $n(s-1)+1$ intervals.

The following result is due to Keane~\cite{Keane1975}.
\begin{theorem}[Keane]\label{theoremKeane}
A regular interval exchange transformation is minimal.
\end{theorem}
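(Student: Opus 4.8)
The plan is to prove that a regular interval exchange transformation $T=T_{\lambda,\pi}$ on $[0,1[$ is minimal, following Keane's original argument. First I would recall that an orbit closure argument reduces the problem: it suffices to show that for every nonempty open interval $U\subseteq[0,1[$, the union $\bigcup_{n\ge 0}T^n(U)$ is all of $[0,1[$, or at least that the orbit of every point meets every such $U$. A convenient intermediate notion is that of a \emph{$T$-invariant} set: I would argue with a nonempty closed set $F\subseteq[0,1[$ that is invariant under both $T$ and $T^{-1}$ and show $F=[0,1[$; since the orbit closure of any point is such a set, this yields minimality.

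The heart of the matter is to analyze the structure of $T$-invariant closed sets. Given such an $F$, consider its complement, a countable union of disjoint open intervals, and look at a maximal open interval $]u,v[$ in the complement (or, dually, work directly with the connected components). The key combinatorial input is that $T$ and $T^{-1}$ are piecewise translations whose only discontinuities sit at the finitely many separation points $0,\mu_1,\dots,\mu_{s-1}$ (for $T$) and at $0,\delta_2,\dots,\delta_s$ (for $T^{-1}$). So, starting from the right endpoint $v$ of a complementary interval and iterating $T$ and $T^{-1}$, the endpoint moves by translations and stays an endpoint of a complementary interval \emph{unless} it lands on a separation point. I would show that the set of left endpoints (or right endpoints) of the complementary intervals of a minimal-length complementary interval is finite and forms, together with its images, a union of orbits of separation points. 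This is exactly where \emph{regularity} enters: if the orbits of $\mu_1,\dots,\mu_{s-1}$ were infinite but not disjoint, or finite, one could build a nontrivial invariant closed set; the idoc hypothesis forbids precisely the ``connections'' that would let the endpoint-chasing argument close up into a finite $T$-invariant configuration.

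More concretely, I expect the argument to run as follows. Let $F$ be a nonempty $T$-invariant ($T$ and $T^{-1}$) closed set, assume $F\ne[0,1[$, and among the connected components of $[0,1[\setminus F$ pick one, say $L$, of minimal length $\ell>0$. Using that $T$ restricted to any $I_a$ is a translation, one shows $T(L)$ is again contained in some component of the complement (it cannot straddle $F$), and by minimality of $\ell$ and bijectivity of $T$ it must have the \emph{same} length $\ell$; hence $T(L)$ is itself a minimal component. The same holds for $T^{-1}$. Therefore the set $\mathcal{M}$ of minimal components is stable under $T$ and $T^{-1}$. But a component can only fail to be translated rigidly if one of its endpoints is a separation point of $T$ (for forward iteration) or of $T^{-1}$; tracking the left endpoints of members of $\mathcal{M}$, each such endpoint has an orbit that either is infinite and wanders — impossible since there are only finitely many such endpoints if $\mathcal{M}$ is finite — or must repeatedly hit separation points. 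A counting/pigeonhole argument shows $\mathcal{M}$ is finite, and then the left endpoints of its members, being permuted by $T$, decompose into cycles; chasing these cycles back to the separation points produces either a finite orbit of some $\mu_i$ or a coincidence of the orbits of two separation points, contradicting regularity. Hence no such proper $F$ exists, so every orbit closure is $[0,1[$ and $T$ is minimal.

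The main obstacle, and the step requiring the most care, is the rigidity/finiteness claim: showing that a minimal-length complementary interval is genuinely translated onto another complementary interval (so that lengths are preserved and the collection of minimal components is closed under $T^{\pm 1}$), and then that this collection is finite. The subtlety is that $T$ is only \emph{piecewise} a translation, so a priori $T(L)$ could be split by a separation point lying inside $L$; one must argue that if that happened, the two pieces would be even shorter complementary intervals — unless an endpoint of $L$ is itself a separation point, which is the case that feeds directly into the regularity hypothesis. Handling this dichotomy cleanly, and extracting from it a precise contradiction with the ``infinite and disjoint orbits'' condition, is the crux; the remaining bookkeeping with the finitely many separation points and their orbits is routine once this is in place.
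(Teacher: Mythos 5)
The paper does not actually prove this statement: it is quoted from Keane's 1975 article with a citation, so there is no internal proof to compare yours against; I can only judge the sketch on its own. Your overall strategy --- analyse a proper closed invariant set through the components of its complement and chase their endpoints into the orbits of the separation points --- is indeed the shape of Keane's argument, but the step you build everything on is broken. First, a complementary component of \emph{minimal} length need not exist: the complement is an open set which may have infinitely many components, and since their lengths are summable they then accumulate at $0$, so the infimum is not attained. Your set $\mathcal{M}$ of ``minimal components'' may be empty, and its finiteness cannot be rescued by a later pigeonhole argument, because the pigeonhole is needed to make sense of $\mathcal{M}$ in the first place. Second, even when a shortest component $L$ does exist, minimality of $\ell$ gives the inequality in the useless direction: if $T(L)$ is a single interval it lies in some component $L'$ with $|L'|\ge\ell$, and nothing forces equality --- the image can land strictly inside a longer gap, so the family of shortest gaps is not stable under $T^{\pm 1}$ and your cycles of minimal components never materialize. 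The standard repair is to take a component of \emph{maximal} length $\ell$ (that maximum is attained precisely because the lengths are summable); then $T(L)\subseteq L'$ together with $|T(L)|=\ell$ forces $L'=T(L)$, and the collection of longest gaps that avoid separation points really is permuted by $T$.

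Beyond reversing the extremum, two points you explicitly defer are load-bearing and each uses regularity. (i) A longest gap whose interior contains one of the $s-1$ separation points has an image that is not an interval; one must bound the number of such gaps and show that every infinite orbit of gap endpoints passes through one of them (summability of the lengths is used a second time here). (ii) A cycle of gaps, $T^p(L)=L$, produces a periodic point, and excluding periodic points is itself a consequence of regularity: a periodic point forces $T^p$ to be the identity on a whole cylinder $I_w$, whose endpoints lie in the backward orbits of the $\mu_i$, making some $\mu_i$ periodic and hence of finite orbit. Finally, since $T$ is only right-continuous, the closure of an orbit is not automatically invariant under $T$ and $T^{-1}$; the clean way out is to track the \emph{left} endpoints of the gaps, for which right-continuity of both $T$ and $T^{-1}$ does give invariance. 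None of this is fatal to the approach, but as written the central rigidity claim of your proposal is false rather than merely unproved.
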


The converse is not true. Indeed, consider the rotation of angle $\alpha$
with $\alpha$ irrational, as a $3$-interval exchange transformation
with $\lambda=(1-2\alpha,\alpha,\alpha)$ and
$\pi=(132)$. The transformation is minimal as any rotation of
irrational angle but
it is not regular since $\mu_1=1-2\alpha$, $\mu_2=1-\alpha$ and
thus $\mu_2=T(\mu_1)$.

The following necessary condition for minimality
of an interval exchange transformation is useful.
A permutation $\pi$ of an ordered set $A$ is called
\emph{decomposable}
if there exists an element $b\in A$ such that the set $B$
of elements strictly less than $b$ is nonempty and such that
$\pi(B)=B$. Otherwise it is called
\emph{indecomposable}. If an interval exchange transformation
$T=T_{\lambda,\pi}$ is minimal, the permutation $\pi$ is
indecomposable.
Indeed, if $B$ is a set as above, the  set $S=\cup_{a\in B}I_a$  is 
closed under $T$ and strictly included in
$[0,1[$.

  The following example shows  that the indecomposability of $\pi$ is not 
sufficient for $T$ to be minimal.
\begin{example}
Let $A=\{a,b,c\}$ and $\lambda$ be such that $\lambda_a=\lambda_c$.
Let $\pi$ be the transposition $(ac)$. Then $\pi$ is indecomposable
but $T_{\lambda,\pi}$ is not minimal since it is the identity on $I_b$.
\end{example}
\subsection{Natural coding}

Let $A$ be a finite nonempty alphabet. All words considered below,
unless
stated explicitly, are supposed to be on the alphabet $A$.
We denote by $A^*$ the set of all words on $A$.
We denote by $1$ or by $\varepsilon$ the empty word. We refer to
\cite{BerstelPerrinReutenauer2009} for the notions of prefix, suffix,
factor of a word.

 Let $T$ be an interval exchange transformation relative to 
$(I_a)_{a\in A}$. 
For a given real number $z\in[0,1[$,
the \emph{natural coding} of $T$ relative to $z$ is the infinite word
$\Sigma_T(z)=a_0a_1\cdots$ on the alphabet $A$
defined by 
\begin{displaymath}
a_n=a\quad \text{ if }\quad T^n(z)\in I_{a}.
\end{displaymath}

For a word $w=b_0b_1\cdots b_{m-1}$, 
let $I_w$ be the set
\begin{equation}
I_w=I_{b_0}\cap T^{-1}(I_{b_1})\cap\ldots\cap T^{-m+1}(I_{b_{m-1}}).\label{eqIu}
\end{equation}
Note that each $I_w$ is a semi-interval. Indeed, this is true if $w$ is
a letter. Next,  assume that $I_w$ is a semi-interval. Then
for any $a\in A$,
$T(I_{aw})=T(I_a)\cap I_w$ is a semi-interval since $T(I_a)$ is a
semi-interval
by definition of an interval exchange transformation. Since $I_{aw}\subset
I_a$, $T(I_{aw})$ is a translate of $I_{aw}$, which is therefore also
a semi-interval. This proves the property by induction on the length.

Set $J_w=T^m(I_w)$. Thus
\begin{equation}
J_w= T^{m}(I_{b_0})\cap T^{m-1}(I_{b_1})\cap\ldots\cap T(I_{b_{m-1}}).\label{eqJu}
\end{equation}
In particular, we have $J_a=T(I_a)$ for $a\in A$.
Note that each $J_w$ is a semi-interval. Indeed, this is true
if $w$ is a letter. Next, for any $a\in A$, we have 
$T^{-1}(J_{wa})=J_w\cap I_a$. This implies as above that $J_{wa}$ is
a semi-interval and proves the property by induction.
We set by convention $I_\varepsilon=J_\varepsilon=[0,1[$.
Then one has for any $n\ge 0$
\begin{equation}
a_na_{n+1}\cdots a_{n+m-1}=w \Longleftrightarrow T^n(z)\in I_w\label{eqIw}
\end{equation}
and
\begin{equation}
a_{n-m}a_{n-m+1}\cdots a_{n-1}=w \Longleftrightarrow T^n(z)\in J_w.\label{eqJw}
\end{equation}
Let $(\alpha_a)_{a\in A}$ be the translation values of $T$. Note that 
for any word $w$,
\begin{equation}
J_w=I_w+\alpha_w \label{eqJw2} 
\end{equation}
with  $\alpha_w=\sum_{j=0}^{m-1}\alpha_{b_j}$ as one may verify by
induction on $|w|=m$. Indeed it is true for $m=1$. For $m\ge 2$, 
set $w=ua$ with $a=b_{m-1}$. One has
$T^m(I_w)=T^{m-1}(I_w)+\alpha_{a}$
and $T^{m-1}(I_w)=I_w+\alpha_u$ by the induction
hypothesis
and the fact that $I_w$ is included in $I_u$. Thus
$J_w=T^m(I_w)=I_w+\alpha_u+\alpha_a=I_w+\alpha_w$.
Equation~\eqref{eqJw2} shows in particular that the restriction
of $T^{|w|}$ to $I_w$ is a translation.

\subsection{Uniformly recurrent sets}

A set $S$ of words on the alphabet $A$
is said to be \emph{factorial} if it contains the
factors of its elements. 

A factorial set is said to be \emph{right-extendable}
if for every $w\in S$ there is some $a\in A$ such that $wa\in S$.
It is \emph{biextendable} if for any $w\in S$, there are $a,b\in A$
such that $awb\in S$.

A set of words $S\ne \{\varepsilon\}$ is \emph{recurrent} if it is factorial and if for every
$u,w\in S$ there is a $v\in S$ such that $uvw\in S$. A recurrent set
 is biextendable.
It is said to be \emph{uniformly recurrent} if it is 
right-extendable and if, for any word $u\in S$, there exists an integer $n\ge
1$
such that $u$ is a factor of every word of $S$ of length $n$.
A uniformly recurrent set is recurrent.

We denote by $A^{\N}$ the set of infinite words on the alphabet $A$.
For a set $X\subset A^\N$, we denote by $F(X)$ the set of factors
of the words of $X$. 

Let $S$ be a set of words on the alphabet $A$.
For $w\in S$, set $R(w)=\{a\in A\mid wa\in S\}$ and
$L(w)=\{a\in A\mid aw\in S\}$.
A word $w$ is called \emph{right-special} if $\Card(R(w))\ge 2$
and  \emph{left-special} if $\Card(L(w))\ge 2$.
It is \emph{bispecial} if it is both right and left-special.

An infinite word on a binary alphabet is \emph{Sturmian} if its set
of factors is closed under reversal and if for each $n$
there is exactly one right-special word of length $n$.

An infinite word is a \emph{strict episturmian} word if its set
of factors is closed under reversal and for each $n$ there is
 exactly one right-special word $w$ of length $n$, which is
moreover such that $\Card(R(w))=\Card(A)$.

A morphism $f:A^*\rightarrow A^*$ is called \emph{primitive} if there
is an integer $k$ such that for all $a,b\in A$, the letter $b$
appears in $f^k(a)$. If $f$ is a primitive morphism, the set
of factors of any fixpoint
of $f$ is uniformly recurrent (see~\cite[Proposition 1.2.3]{PytheasFogg2002},
 for example).

\begin{example}\label{exampleFibonacci}
Let $A=\{a,b\}$.
The Fibonacci word is the fixpoint $x=f^\omega(a)=abaababa\ldots$ of the
morphism $f:A^*\rightarrow A^*$ defined by $f(a)=ab$ and $f(b)=a$.
It is a Sturmian word (see~\cite{Lothaire2002}). The set $F(x)$ of factors of $x$ is the
\emph{Fibonacci set}.
\end{example}
\begin{example}\label{exampleTribonacci}
Let $A=\{a,b,c\}$.
The Tribonacci word is the fixpoint $x=f^\omega(a)=abacaba\cdots$ of the morphism
$f:A^*\rightarrow A^*$ defined by $f(a)=ab$, $f(b)=ac$, $f(c)=a$.
It is a strict  episturmian word (see~\cite{JustinVuillon2000}).
The set $F(x)$ of factors of $x$ is the \emph{Tribonacci set}.
\end{example} 
\subsection{Interval exchange sets}

Let $T$ be an interval exchange set. The set $F(\Sigma_T(z))$ is called
an \emph{interval exchange set}. It is biextendable.

If $T$ is a minimal interval exchange transformation, one has $w\in F(\Sigma_T(z))$ if
and only
if $I_w\ne\emptyset$.
Thus the set $F(\Sigma_T(z))$ does not depend on $z$.
 Since it depends only on $T$, we denote
it by $F(T)$. When $T$ is regular (resp. minimal), such a set is called a
\emph{regular interval exchange set} (resp. a minimal interval exchange set).

Let $T$ be an interval exchange transformation.
Let $M$ be the closure in $A^\N$ of the set of all $\Sigma_T(z)$ for $z\in [0,1[$ and let $\sigma$
be the shift on $M$. The pair $(M,\sigma)$ is a \emph{symbolic dynamical system}, formed
of a topological space $M$ and a continuous transformation $\sigma$.
Such a system is said to be \emph{minimal} if the only closed
subsets invariant by $\sigma$ are $\emptyset$ or $M$
(that is, every orbit is dense). It is well-known
that $(M,\sigma)$ is minimal if and only if $F(T)$ is uniformly recurrent
(see for example~\cite[Theorem 1.5.9]{Lothaire2002} ).

We have the following commutative diagram (Figure~\ref{figureDiagram}).
\begin{figure}[hbt]
\gasset{Nframe=n}\centering
\begin{picture}(20,20)
\node(T1)(0,20){$[0,1[$}\node(T2)(20,20){$[0,1[$}
\node(S1)(0,0){$M$}\node(S2)(20,0){$M$}

\drawedge(T1,T2){$T$}\drawedge(T1,S1){$\Sigma_T$}
\drawedge(S1,S2){$\sigma$}\drawedge(T2,S2){$\Sigma_T$}
\end{picture}
\caption{The transformations $T$ and $\sigma$.}\label{figureDiagram}
\end{figure}

The map $\Sigma_T$ is neither continuous nor surjective. This
can be corrected by embedding the interval $[0,1[$ into a larger
space on which $T$ is a homeomophism 
(see~\cite{Keane1975} or~\cite[page 349]{BertheRigo2010}). However,
if the transformation $T$ is minimal, the
symbolic dynamical system $(M,S)$ is minimal 
(see~\cite[page 392]{BertheRigo2010}). Thus, we obtain the following
statement.
\begin{proposition}\label{propositionRegularUR}
For any minimal interval exchange transformation $T$,
the set $F(T)$ is uniformly recurrent.
\end{proposition}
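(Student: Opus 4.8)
The plan is to reduce the statement to the equivalence, already quoted in the excerpt, between uniform recurrence of $F(T)$ and minimality of the symbolic dynamical system $(M,\sigma)$, where $M$ is the closure in $A^{\N}$ of $\{\Sigma_T(z)\mid z\in[0,1[\}$. So it suffices to show that if $T$ is minimal as an interval exchange transformation, then $(M,\sigma)$ is minimal as a symbolic system. First I would recall that, by the cited references (Keane, or~\cite[page 392]{BertheRigo2010}), the failure of $\Sigma_T$ to be continuous and surjective is repaired by embedding $[0,1[$ into a compact space $\widehat{[0,1[}$ on which $T$ extends to a homeomorphism $\widehat T$, in such a way that $\Sigma_T$ extends to a continuous surjection $\widehat\Sigma_T\colon\widehat{[0,1[}\to M$ conjugating $\widehat T$ with $\sigma$. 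Concretely, this amounts to splitting each point of the (countable) forward and backward orbit of the separation points into a left and a right copy.

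The key steps are then: (1) verify that minimality of $T$ on $[0,1[$ implies minimality of $\widehat T$ on $\widehat{[0,1[}$ — the added points form a countable union of orbits, and density of every $T$-orbit in $[0,1[$ transfers to density of every $\widehat T$-orbit in $\widehat{[0,1[}$ because the new points are limits of old ones and each old orbit is dense; (2) use the fact that a factor map (continuous, surjective, equivariant) from a minimal system onto another system forces the image to be minimal — indeed any nonempty closed $\sigma$-invariant subset of $M$ pulls back to a nonempty closed $\widehat T$-invariant subset of $\widehat{[0,1[}$, which by minimality is the whole space, so its image is all of $M$. This gives minimality of $(M,\sigma)$, hence uniform recurrence of $F(T)$ by the quoted characterization. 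One should also note that $F(T)$ is exactly the set of factors of words in $M$, so the combinatorial and dynamical notions of uniform recurrence match up.

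The main obstacle, or rather the main point requiring care, is the construction and properties of the compactification $\widehat{[0,1[}$ and the verification that $T$ genuinely becomes a homeomorphism there with $\widehat\Sigma_T$ continuous and onto — this is the content of Keane's argument and I would simply cite it rather than redo it. Given that, the rest is the soft general-topology fact that a minimal system maps onto minimal systems, together with the bookkeeping that the extra points added in the compactification do not create new closed invariant subsets. In the write-up I would therefore lean on~\cite{Keane1975} and~\cite[page 392]{BertheRigo2010} for the compactification and on~\cite[Theorem 1.5.9]{Lothaire2002} for the equivalence with uniform recurrence, and spend the remaining lines only on the transfer of minimality across the factor map.
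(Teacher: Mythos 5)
Your proposal is correct and follows essentially the same route as the paper, which likewise derives the statement from the minimality of the symbolic system $(M,\sigma)$ (citing Keane and \cite[page 392]{BertheRigo2010} for the passage from minimality of $T$ to minimality of $(M,\sigma)$ via the compactification) together with the equivalence between minimality and uniform recurrence from \cite[Theorem 1.5.9]{Lothaire2002}. The extra detail you give on the factor-map argument is a sound elaboration of what the paper leaves to the references.
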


Note that for a minimal interval exchange transformation $T$, the
map $\Sigma_T$ is injective (see \cite{Keane1975} page 30).

The following is an elementary property of the intervals $I_u$
which will be used below. We denote by $<_1$ the lexicographic order on $A^*$
induced by the order $<_1$ on $A$.

\begin{proposition}\label{propIu}
 One has $I_u< I_v$ if and only if $u<_1v$ and $u$ is
not a prefix of $v$. 
\end{proposition}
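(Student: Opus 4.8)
The plan is to prove both directions of the equivalence, exploiting the description of $I_u$ as the set of points $z$ whose coding word starts with $u$, together with the fact that $<_1$ orders the partition $(I_a)_{a\in A}$.

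First I would handle the forward direction. Suppose $I_u < I_v$; in particular both are nonempty. Let $p$ be the longest common prefix of $u$ and $v$, so $u = p u'$ and $v = p v'$. If $p = v$, then $I_u \subseteq I_p = I_v$, contradicting $I_u < I_v$ (nonempty sets that are $<$-comparable are disjoint); so $v$ is not a prefix of $u$, and similarly if $p = u$ we would get $I_v \subseteq I_u$, again impossible. Hence both $u'$ and $v'$ are nonempty, say $u' = a s$ and $v' = b t$ with $a \ne b$. Now apply $T^{|p|}$: since $T^{|p|}$ restricted to $I_p$ is a translation (by \eqref{eqJw2}), it is order-preserving, so $T^{|p|}(I_u) < T^{|p|}(I_v)$. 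But $T^{|p|}(I_u) \subseteq T^{|p|}(I_{u}) \subseteq I_{a}\cap(\text{stuff})\subseteq I_a$ — more precisely $T^{|p|}(I_{pu'}) \subseteq I_{u'} \subseteq I_a$ and likewise $T^{|p|}(I_{pv'})\subseteq I_b$. Since $I_a < I_b$ or $I_b < I_a$ according to the $<_1$-order of $a$ and $b$, and the former must hold, we get $a <_1 b$, hence $u = p a s <_1 p b t = v$ in the lexicographic order, and $u$ is not a prefix of $v$.

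For the converse, assume $u <_1 v$ with $u$ not a prefix of $v$. Again write $p$ for the longest common prefix; since $u$ is not a prefix of $v$, we have $u = p a s$ with $a\in A$, and $u <_1 v$ together with "$u$ not a prefix of $v$" forces $v = p b t$ with $b\in A$ and $a <_1 b$ (the case $v = p$ is excluded because then $u$, which extends $p b' \ldots$, would compare as $v <_1 u$, not $u<_1 v$; more simply $u<_1 v$ and $u$ not a prefix of $v$ means the first position of disagreement exists and $u$ is smaller there). Then $I_u \subseteq I_{pa} $ and $I_v \subseteq I_{pb}$, and it suffices to show $I_{pa} < I_{pb}$, equivalently (applying the order-preserving translation $T^{|p|}$ and using $T^{|p|}(I_{pa}) \subseteq I_a$, $T^{|p|}(I_{pb})\subseteq I_b$, whose union-translates are nested inside $I_a$, $I_b$) that $I_a < I_b$, which holds because $(I_c)_{c\in A}$ is ordered for $<_1$ and $a <_1 b$. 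One has to be slightly careful that $I_{pa}$ and $I_{pb}$ are nonempty — but if either is empty there is nothing to prove since then $I_u$ or $I_v$ is empty and the relation $I_u < I_v$ is vacuously true by the convention that the empty set is $<$ (or $>$) everything — actually the cleanest route is to observe $I_{pa} = I_p \cap T^{-|p|}(I_a)$, and since $T^{|p|}$ maps $I_p$ onto the semi-interval $J_p$, the sets $I_{pa}$ for $a\in A$ are exactly the preimages under the order-preserving map $T^{|p|}|_{I_p}$ of the traces $J_p \cap I_a$, which are ordered by $<_1$; hence the $I_{pa}$ are ordered by $<_1$ on their indices whether or not some are empty.

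The main obstacle I anticipate is purely bookkeeping: keeping straight that $T^{|p|}$ is order-preserving \emph{on $I_p$} (it is a single translation there, by \eqref{eqJw2}) and that it carries $I_{pw}$ into $I_w$, so that the global non-monotone map $T$ nonetheless behaves monotonically on the relevant pieces. Once that reduction-to-a-single-letter-at-position-$|p|$ is set up cleanly, both implications are immediate from the defining property that $(I_a)_{a\in A}$ is $<_1$-ordered. I would also note explicitly, once at the start, the trivial fact that two $<$-comparable semi-intervals are disjoint, since that is what lets me pass from "$I_u$ is not comparable" back to prefix relations.
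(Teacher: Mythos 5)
Your proof is correct and follows essentially the same route as the paper's: reduce to the first position where $u$ and $v$ differ, use that $T^{|p|}$ restricted to $I_p$ is a translation (so that $(I_{pa})_{a\in A}$ is an ordered partition of $I_p$), and conclude from the $<_1$-ordering of $(I_a)_{a\in A}$, with disjointness ruling out prefix-comparability in the forward direction. The only cosmetic difference is that the paper packages the translation argument as the preliminary observation that $(I_{ua})_{a\in A}$ is an ordered partition of $I_u$, and derives the ``only if'' direction from the ``if'' direction by trichotomy rather than re-running the translation argument.
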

\begin{proof}
For a word $u$ and a letter $a$, it results from~\eqref{eqIu}
that $I_{ua}=I_u\cap T^{-|u|}(I_a)$.
Since $(I_a)_{a\in A}$ is an ordered partition, this implies that
$(T^{|u|}(I_u)\cap I_a)_{a\in A}$ is an ordered partition of 
$T^{|u|}(I_u)$. Since the restriction of $T^{|u|}$ to $I_u$
is a translation, this implies that
$(I_{ua})_{a\in A}$ is an ordered partition of $I_u$. Moreover, for two words $u,v$, it
results
also from~\eqref{eqIu} that $I_{uv}=I_u\cap T^{-|u|}(I_v)$. Thus
$I_{uv}\subset I_{u}$.

Assume that $u<_1v$ and that $u$ is not a prefix of $v$. Then
$u=\ell a s$ and $v=\ell b t$ with $a,b$ two letters such that $a<_1b$.
Then we have $I_{\ell a}< I_{\ell b}$, with $I_u\subset I_{\ell a}$
and $I_v\subset I_{\ell b}$ whence $I_u<I_v$.

Conversely, assume that $I_u<I_v$.  Since $I_u\cap I_v=\emptyset$,
the words $u,v$ cannot be comparable for
the prefix order. Set $u=\ell a s$ and $v=\ell b t$ with $a,b$ two distinct
letters.
If $b<_1a$, then $I_v< I_u$ as we have shown above. Thus $a<_1b$ which
implies $u<_1v$.
\end{proof}

We denote by $<_2$ the order on $A^*$ defined by
$u<_2 v$ if $u$ is a proper suffix of $v$ or if
$u=waz$ and $v=tbz$ with $a<_2b$. Thus $<_2$ is the
lexicographic
order on the reversal of the words induced by the order $<_2$ on the alphabet.

We denote by $\pi$ the morphism from $A^*$ onto itself
which extends to $A^*$ the permutation $\pi$ on $A$.
Then $u<_2 v$ if and only if $\pi^{-1}(\tilde{u})<_1\pi^{-1}(\tilde{v})$,
where $\tilde{u}$ denotes the reversal of the word $u$.

The following statement is the analogue of Proposition~\ref{propIu}.
\begin{proposition}\label{propJu}
Let $T_{\lambda,\pi}$ be an interval exchange transformation.
  One has $J_u< J_v$ if and only if
$u<_2 v$  and $u$ is not a suffix of $v$.
\end{proposition}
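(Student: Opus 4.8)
The plan is to mirror the proof of Proposition~\ref{propIu}, working with the sets $J_w$ and the order $<_2$ instead of the sets $I_w$ and $<_1$. The key structural facts I would first establish are the dual analogues of the two ingredients used for $I_u$: first, that for a word $u$ and a letter $a$, one has $J_{au}=J_u\cap T^{|u|}(I_a)$ (which follows from $T^{-1}(J_{wa})=J_w\cap I_a$ applied suitably, or directly from~\eqref{eqJu}), so that since $(J_a)_{a\in A}=(T(I_a))_{a\in A}$ is a partition of $[0,1[$ ordered for $<_2$ and $T^{|u|}$ restricted to $I_u$ is a translation, the family $(J_{au})_{a\in A}$ is a partition of $J_u$ ordered for $<_2$; and second, that $J_{uv}\subset J_v$, which again reads off from~\eqref{eqJu}. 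These are the exact duals of the two facts ``$(I_{ua})_{a\in A}$ is an ordered partition of $I_u$'' and ``$I_{uv}\subset I_u$''.

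With those in hand, the forward direction runs as follows. Suppose $u<_2v$ and $u$ is not a suffix of $v$. By definition of $<_2$ this means $u=saz$ and $v=tbz$ with $a,b$ letters and $a<_2b$ (the proper-suffix case is excluded precisely because $u$ is not a suffix of $v$). Then $J_u\subset J_{az}$ and $J_v\subset J_{bz}$, and by the ordered-partition fact $J_{az}<J_{bz}$ since $a<_2b$; hence $J_u<J_v$. For the converse, assume $J_u<J_v$. Then $J_u\cap J_v=\emptyset$, so $u$ and $v$ cannot be comparable for the suffix order (if one were a suffix of the other, say $u$ a suffix of $v$, then $J_v\subset J_u$, contradicting disjointness with $J_u<J_v$ forcing them distinct and nested the wrong way — more precisely $J_v\subseteq J_u$ is incompatible with $J_u<J_v$). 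Write $u=saz$, $v=tbz$ with $z$ the longest common suffix and $a\ne b$ letters. If $b<_2a$ then the forward argument gives $J_v<J_u$, a contradiction; hence $a<_2b$, which is exactly $u<_2v$, and we are done.

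I expect the only genuinely delicate point to be the bookkeeping at the start: carefully justifying that $(J_{au})_{a\in A}$ is a partition of $J_u$ ordered for $<_2$. The subtlety is that one must pass through $I_u$: the sets $(I_a)_{a\in A}$ are ordered for $<_1$, not $<_2$, so it is the images $(J_a)_{a\in A}=(T(I_a))_{a\in A}$ that are ordered for $<_2$, and one needs $J_{au}=J_u\cap T^{|u|}(I_a)$ together with the fact that $T^{|u|}$ is an increasing translation on $I_u$ (equation~\eqref{eqJw2} guarantees it is a translation; order-preservation is automatic for a translation) to transport the $<_2$-ordering of the pieces $T(I_a)\cap$(something) correctly. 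Everything else is a routine transcription of the proof of Proposition~\ref{propIu} with ``prefix'' replaced by ``suffix'', $<_1$ by $<_2$, and $I$ by $J$; alternatively, one could deduce the statement from Proposition~\ref{propIu} applied to the inverse transformation $T^{-1}$, using that $J_w$ for $T$ corresponds to $I_{\tilde w}$ for $T^{-1}$ and that the role of $<_2$ for $T$ is played by $<_1$ for $T^{-1}$ via the permutation $\pi$, but I would favor the direct transcription as cleaner to write out.
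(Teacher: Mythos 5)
Your argument is correct, but it is not the route the paper takes. The paper proves Proposition~\ref{propJu} by reduction to Proposition~\ref{propIu}: it introduces the inverse transformation $T'=T^{-1}$, which is the interval exchange relative to the family $I'_a=J_{\pi(a)}$ (ordered for $<_1$), observes that the intervals of $T'$ satisfy $I'_w=J_{\pi(\tilde{w})}$, equivalently $J_w=I'_{\pi^{-1}(\tilde{w})}$, and then translates the prefix/$<_1$ statement for $T'$ into the suffix/$<_2$ statement for $T$ --- exactly the alternative you mention and set aside in your last sentence. Your direct transcription works and is arguably more self-contained: the two dual facts you isolate ($J_{uv}\subset J_v$, read off from~\eqref{eqJu}, and the fact that $(J_{aw})_{a\in A}$ is a partition of $J_w$ ordered for $<_2$) do carry the whole argument, and your identification of the one delicate point --- that the $<_2$-ordering lives on the images $J_a=T(I_a)$ rather than on the $I_a$, and is transported by the translation $T^{|w|}$ restricted to $I_w$ from~\eqref{eqJw2} --- is exactly right. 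One index slip to fix when you write it out: from~\eqref{eqJu} one gets $J_{au}=J_u\cap T^{|u|+1}(I_a)=J_u\cap T^{|u|}(J_a)$, not $J_u\cap T^{|u|}(I_a)$; the cleanest formulation is $J_{aw}=T^{|w|}(J_a\cap I_w)=(J_a\cap I_w)+\alpha_w=(J_a+\alpha_w)\cap J_w$, which makes the ordered-partition claim immediate. The trade-off between the two proofs: the paper's reduction avoids redoing any interval bookkeeping, at the price of verifying that $T^{-1}$ is indeed the interval exchange relative to $(J_{\pi(a)})_{a\in A}$ and that $I'_w=J_{\pi(\tilde{w})}$ (and of silently carrying the ``not a prefix/suffix'' clause through the translation), while yours repeats the bookkeeping once but in a form where every step is visibly the mirror image of the proof of Proposition~\ref{propIu}.
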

\begin{proof}
Let $(I'_a)_{a\in A}$ be the family of semi-intervals defined by
$I'_a=J_{\pi(a)}$.
 Then the interval exchange transformation $T'$
relative to $(I'_a)$ with translation values $-\alpha_a$ is the inverse of the
transformation $T$. The semi-intervals $I'_w$ defined by
Equation~\eqref{eqIu}
with respect to $T'$ satisfy $I'_w=J_{\pi(\tilde{w})}$
or equivalently $J_w=I'_{\pi^{-1}(\tilde{w})}$. Thus,
$J_u< J_v$ if and only if $I'_{\pi^{-1}(\tilde{u})}< I'_{\pi^{-1}(\tilde{v})}$
if and only if (by Proposition~\ref{propIu}) $\pi^{-1}(\tilde{u})<_1\pi^{-1}(\tilde{v})$ or equivalently
$u<_2 v$.
\end{proof}

\section{Bifix codes and interval exchange}\label{sectionBifixCodes}
In this section, we first introduce prefix codes and bifix codes.
For a more detailed exposition, see~\cite{BerstelPerrinReutenauer2009}.
We describe the link between maximal bifix codes and interval
exchange transformations and we prove our main result (Theorem~\ref{corollaryInverseImage}).

\subsection{Prefix codes and bifix codes}
A \emph{prefix code} is a set of nonempty words which does not contain any
proper prefix of its elements. A suffix code is defined symmetrically.
A  \emph{bifix code} is a set which is both a prefix code and a suffix
code.

A \emph{coding morphism} for a prefix code $X\subset A^+$ is a morphism
$f:B^*\rightarrow A^*$ which maps bijectively $B$ onto $X$.

Let $S$ be a set of words. A prefix code $X\subset S$ is $S$-maximal 
if it is not properly contained in any prefix code 
$Y\subset S$. Note that if $X\subset S$ is an $S$-maximal prefix code, 
any word of $S$ is comparable for the prefix order with a word of $X$.

A map $\lambda:A^*\rightarrow [0,1]$ such that $\lambda(\varepsilon)=1$
and, for any word $w$
\begin{equation}
\sum_{a\in A}\lambda(aw)=\sum_{a\in A}\lambda(wa)=\lambda(w),\label{eqProba}
\end{equation}
is called an \emph{invariant probability distribution} on $A^*$.

Let $T_{\lambda,\pi}$ be an interval exchange transformation.
For any word $w\in A^*$, denote by  $|I_w|$ the length of the
semi-interval $I_w$ defined by Equation~\eqref{eqIu}. Set $\lambda(w)=|I_w|$.
Then $\lambda(\varepsilon)=1$
and for any word $w$, Equation~\eqref{eqProba} holds and
thus $\lambda$ is an invariant probability distribution.

The fact that $\lambda$ is an invariant probability measure is
equivalent to the fact that the Lebesgue measure on $[0,1[$
is invariant by $T$. It is known that almost all regular
interval exchange transformations have no other invariant
probability measure (and thus are uniquely ergodic,
see~\cite{BertheRigo2010} for references).

\begin{example}\label{exampleFibonacciAlpha}
Let $S$ be the set of factors of the Fibonacci word (see Example~\ref{exampleFibonacci}).
It is the natural coding of the rotation of angle
$\alpha=(3-\sqrt{5})/2$ with respect to $\alpha$
(see~\cite[Chapter 2]{Lothaire2002}).
The values of the map $\lambda$ on the words of length at most $4$
in $S$ are indicated in
Figure~\ref{figProbaFibo}.
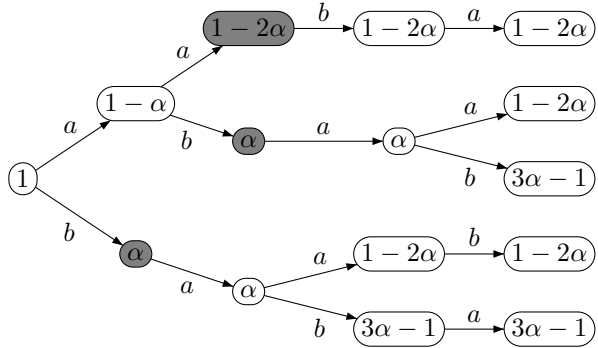
\begin{figure}[hbt]
\centering
\gasset{Nadjust=wh}
\begin{picture}(100,50)
\node(1)(0,25){$1$}
\node(a)(15,35){$1-\alpha$}\node[fillgray=.5](b)(15,15){$\alpha$}
\node[fillgray=.5](aa)(30,45){$1-2\alpha$}\node[fillgray=.5](ab)(30,30){$\alpha$}
\node(ba)(30,10){$\alpha$}
\node(aab)(50,45){$1-2\alpha$}
\node(aba)(50,30){$\alpha$}
\node(baa)(50,15){$1-2\alpha$}\node(bab)(50,5){$3\alpha-1$}
\node(aaba)(70,45){$1-2\alpha$}
\node(abaa)(70,35){$1-2\alpha$}\node(abab)(70,25){$3\alpha-1$}
\node(baab)(70,15){$1-2\alpha$}
\node(baba)(70,5){$3\alpha-1$}
\drawedge(1,a){$a$}\drawedge[ELside=r](1,b){$b$}
\drawedge(a,aa){$a$}\drawedge[ELside=r](a,ab){$b$}
\drawedge(aa,aab){$b$}\drawedge(ab,aba){$a$}
\drawedge(aab,aaba){$a$}
\drawedge(aba,abaa){$a$}\drawedge[ELside=r](aba,abab){$b$}
\drawedge[ELside=r](b,ba){$a$}
\drawedge(ba,baa){$a$}\drawedge[ELside=r](ba,bab){$b$}
\drawedge(baa,baab){$b$}\drawedge(bab,baba){$a$}
\end{picture}
\caption{The invariant probability distribution on the 
  Fibonacci set.}\label{figProbaFibo}
\end{figure}
\end{example}

The following result is a particular case of a result 
from~\cite{BerstelDeFelicePerrinReutenauerRindone2012} (Proposition 3.3.4).

\begin{proposition}\label{propositionProbMax}
Let $T$ be a minimal interval exchange transformation,
let $S=F(T)$ and let $\lambda$ be an invariant
probability
distribution on $S$. For any finite $S$-maximal prefix code $X$, one has
$\sum_{x\in X}\lambda(x)=1$.
\end{proposition}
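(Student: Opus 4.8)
The plan is to express the probability $\lambda(x)=|I_x|$ geometrically and exploit the fact that, for a minimal interval exchange transformation, a word $w$ lies in $S=F(T)$ if and only if $I_w\neq\emptyset$. First I would observe that, since $X$ is an $S$-maximal prefix code, every word of $S$ is comparable for the prefix order with a word of $X$; in particular every sufficiently long word of $S$ has a prefix in $X$ (here one uses that $X$ is finite, so there is a bound $N=\max_{x\in X}|x|$, and any $w\in S$ with $|w|\ge N$ that is a proper prefix of no element of $X$ must instead have some $x\in X$ as a prefix). Dually, no two distinct elements of $X$ are prefix-comparable, so the semi-intervals $I_x$ for $x\in X$ are pairwise disjoint: if $x,y\in X$ are distinct then neither is a prefix of the other, hence by Proposition~\ref{propIu} either $I_x<I_y$ or $I_y<I_x$, so $I_x\cap I_y=\emptyset$.

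Next I would show that the $I_x$, $x\in X$, cover $[0,1[$ up to a set of measure zero, which combined with disjointness gives $\sum_{x\in X}|I_x|=1$, i.e.\ $\sum_{x\in X}\lambda(x)=1$. To see the covering, fix $n\ge N$ and consider the partition of $[0,1[$ into the nonempty semi-intervals $I_w$ with $|w|=n$ (these are nonempty exactly when $w\in S$, by minimality, and they partition $[0,1[$ because $T^n$ restricted to each is a translation and the $(I_a)_{a\in A}$ refine correctly, as established in the proof of Proposition~\ref{propIu}). Each such $w\in S$ of length $n\ge N$ has a prefix $x\in X$, and then $I_w\subseteq I_x$ since $I_{xv}\subseteq I_x$ for any $v$. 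Hence $[0,1[=\bigcup_{|w|=n,\,w\in S}I_w\subseteq\bigcup_{x\in X}I_x$, so the union of the $I_x$ is all of $[0,1[$. Taking Lebesgue measure and using disjointness yields $1=\sum_{x\in X}|I_x|=\sum_{x\in X}\lambda(x)$.

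The one point that needs a little care — and which I expect to be the main obstacle — is the claim that every word of $S$ of length at least $N=\max_{x\in X}|x|$ has a prefix in $X$. The $S$-maximality of $X$ only guarantees prefix-comparability: such a $w$ is comparable with some $x\in X$, meaning $w$ is a prefix of $x$ or $x$ is a prefix of $w$; since $|w|\ge N\ge|x|$ the first case forces $w=x$, and in either case $w$ has a prefix in $X$. This handles all long words; shorter words of $S$ are irrelevant to the covering argument since we are free to take $n$ as large as we like. (Alternatively, one can invoke directly that since $\lambda$ is an invariant probability distribution and $X$ is an $S$-maximal prefix code, the cited Proposition~3.3.4 of~\cite{BerstelDeFelicePerrinReutenauerRindone2012} applies verbatim; the geometric argument above is simply the specialization of that proof to the interval exchange setting, where $\lambda(w)=|I_w|$ makes the ``sum of measures of a partition equals $1$'' statement transparent.) Finiteness of $X$ is essential: it is what provides the uniform length bound $N$, and without it the covering step could fail.
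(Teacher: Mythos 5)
Your geometric argument is correct as far as it goes, but it proves a special case of the proposition rather than the statement itself. The proposition is about an \emph{arbitrary} invariant probability distribution $\lambda$ on $S$, whereas your very first step identifies $\lambda(x)$ with $|I_x|$, i.e.\ with the one particular distribution induced by Lebesgue measure. A minimal interval exchange transformation need not be uniquely ergodic (the paper itself only asserts unique ergodicity for \emph{almost all} regular transformations), so there may be invariant probability distributions on $S$ other than $w\mapsto|I_w|$, and for those your covering-plus-disjointness computation says nothing. The paper gives no argument here at all: it simply quotes Proposition~3.3.4 of~\cite{BerstelDeFelicePerrinReutenauerRindone2012}, a purely combinatorial statement valid for any recurrent set and any invariant distribution; so your parenthetical fallback \emph{is} the paper's proof, while your claim that the geometric argument is ``the specialization of that proof'' is not accurate --- the general proof runs through the invariance relation~\eqref{eqProba}, not through lengths of intervals. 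To make your argument self-contained for general $\lambda$, replace the measure computation by the combinatorial one: iterating~\eqref{eqProba} gives $\lambda(x)=\sum_{v\in A^{N-|x|}}\lambda(xv)$ with $N=\max_{x\in X}|x|$; summing over $x\in X$ counts each $\lambda(w)$ with $|w|=N$ once for every prefix of $w$ lying in $X$, and since every word of $S$ of length $N$ has exactly one such prefix (existence by $S$-maximality, exactly as you argue; uniqueness because $X$ is a prefix code) while $\lambda$ vanishes off $S$, one gets $\sum_{x\in X}\lambda(x)=\sum_{|w|=N}\lambda(w)=\lambda(\varepsilon)=1$.

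That said, your geometric reasoning is not wasted: disjointness via Proposition~\ref{propIu} together with the covering by the length-$n$ cylinders is in substance a direct proof of Proposition~\ref{propositionPrefixPartition} (that $(I_x)_{x\in X}$ is an ordered partition of $[0,1[$), which the paper instead derives \emph{from} the present proposition applied to $\lambda(w)=|I_w|$. Your treatment of the point you flagged as delicate --- that every word of $S$ of length at least $N$ has a prefix in $X$ --- is correct.
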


The following statement is connected with Proposition~\ref{propositionProbMax}.
\begin{proposition}\label{propositionPrefixPartition}
Let $T$ be a minimal interval exchange transformation relative to 
$(I_a)_{a\in A}$, let $S=F(T)$ and
let $X$ be a finite $S$-maximal prefix code ordered by $<_1$.
 The family $(I_w)_{w\in X}$  is an ordered
partition of $[0,1[$.
\end{proposition}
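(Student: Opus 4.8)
The goal is to show that $(I_w)_{w\in X}$ is an ordered partition of $[0,1[$ when $X$ is a finite $S$-maximal prefix code ordered by $<_1$. There are three things to establish: the intervals are pairwise disjoint (and correctly ordered), they cover $[0,1[$, and each is nonempty. The plan is to combine Proposition~\ref{propIu} (comparing $I_u$ and $I_v$ via the lexicographic order $<_1$), Proposition~\ref{propositionProbMax} (the measures $\lambda(x)=|I_x|$ sum to $1$ over $X$), and the characterization that for minimal $T$ one has $w\in S$ if and only if $I_w\neq\emptyset$.

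First I would handle disjointness and the ordering. Take $u,v\in X$ with $u\ne v$ and $u<_1 v$. Since $X$ is a prefix code, $u$ is not a prefix of $v$; hence Proposition~\ref{propIu} gives $I_u<I_v$, so in particular $I_u\cap I_v=\emptyset$. This simultaneously shows the family is pairwise disjoint and that enumerating $X$ in $<_1$-order lists the intervals in left-to-right order along $[0,1[$. Next, each $I_w$ for $w\in X$ is nonempty: since $X\subset S$, the word $w$ lies in $S=F(T)$, and because $T$ is minimal, $w\in F(T)$ forces $I_w\ne\emptyset$; in fact each $I_w$ is a semi-interval of positive length $\lambda(w)>0$.

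It remains to show the $I_w$ cover $[0,1[$. Here I would invoke Proposition~\ref{propositionProbMax}: since $X$ is a finite $S$-maximal prefix code and $\lambda$ (defined by $\lambda(w)=|I_w|$) is an invariant probability distribution on $S$, we get $\sum_{w\in X}|I_w|=\sum_{w\in X}\lambda(w)=1$. Now the $I_w$ are pairwise disjoint semi-intervals contained in $[0,1[$ whose total length is $1$; I claim their union is all of $[0,1[$. This is the point requiring a small argument rather than pure bookkeeping: a finite disjoint union of left-closed right-open subintervals of $[0,1[$ with total length $1$ need not a priori be the whole interval only if there were a ``gap,'' but a gap would be an open set of positive measure disjoint from the union, contradicting that the measures sum to $1$; more carefully, order the finitely many intervals $I_{w_1}<I_{w_2}<\dots<I_{w_k}$ by the ordering just established, write $I_{w_i}=[\gamma_i,\mu_i[$, and observe that $\sum(\mu_i-\gamma_i)=1$ together with $0\le\gamma_1$, $\mu_i\le\gamma_{i+1}$, $\mu_k\le 1$ forces $\gamma_1=0$, $\mu_i=\gamma_{i+1}$ for all $i$, and $\mu_k=1$; hence the union is exactly $[0,1[$. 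This telescoping/measure step is the only genuine obstacle, and it is routine.

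Putting the pieces together: the $(I_w)_{w\in X}$ are nonempty pairwise disjoint semi-intervals, ordered consistently with the $<_1$-order on $X$, and their union is $[0,1[$; therefore they form an ordered partition of $[0,1[$, which is the claim.
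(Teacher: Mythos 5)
Your proof is correct and follows essentially the same route as the paper: disjointness and ordering from Proposition~\ref{propIu}, total length $1$ from Proposition~\ref{propositionProbMax}, and the conclusion that the intervals therefore tile $[0,1[$. The only difference is that you spell out the telescoping step and the nonemptiness of each $I_w$, which the paper leaves implicit.
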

\begin{proof}
By Proposition~\ref{propIu}, the sets $(I_w)$  for $w\in X$ are
pairwise disjoint. Let $\pi$ be the invariant probability
distribution
on $S$ defined by $\pi(w)=|I_w|$. By
Proposition~\ref{propositionProbMax},
we have $\sum_{w\in X}\pi(w)=1$. Thus the family $(I_w)_{w\in X}$
is a partition of $[0,1[$. By Proposition~\ref{propIu} it is an
  ordered partition.
\end{proof}

\begin{example}
Let $T$ be the rotation of angle $\alpha=(3-\sqrt{5})/2$. The set $S=F(T)$
is the Fibonacci set. The set $X=\{aa,ab,b\}$ is an $S$-maximal prefix
code (see the grey nodes in Figure~\ref{figProbaFibo}). The  partition of $[0,1[$ corresponding to $X$ is
\begin{displaymath}
I_{aa}=[0,1-2\alpha[,\quad I_{ab}=[1-2\alpha,1-\alpha[,\quad I_{b}=[1-\alpha,1[.
\end{displaymath}
The values of the lengths of the semi-intervals (the invariant
probability distribution) can also be read on
Figure~\ref{figProbaFibo}.

\end{example}
A symmetric statement holds for an $S$-maximal suffix code, namely
that the family $(J_w)_{w\in X}$  is an ordered partition of $[0,1[$
for the order $<_2$ on $X$.

\subsection{Maximal bifix codes}
Let $S$ be a set of words.
A bifix code $X\subset S$ is $S$-maximal if it is
not properly contained in a bifix code $Y\subset S$.
For a recurrent set $S$, a finite bifix code is $S$-maximal as a bifix code if
and only if it is an $S$-maximal prefix code 
(see~\cite[Theorem 4.2.2]{BerstelDeFelicePerrinReutenauerRindone2012}). 

A \emph{parse} of a word $w$ with respect to a bifix code $X$ is
a triple $(v,x,u)$ such that $w=vxu$ where $v$ has no suffix in $X$,
$u$ has no prefix in $X$ and $x\in X^*$.
We denote by $\delta_X(w)$ the number of parses of $w$ with respect to $X$.

The number of parses of a word $w$ is also equal to the number
of suffixes of $w$ which have no prefix in $X$ and the
 number of prefixes of $w$ which have no suffix in $X$
(see Proposition 6.1.6 in~\cite{BerstelPerrinReutenauer2009}).

By definition, the $S$-\emph{degree} of a bifix code
 $X$, denoted $d_X(S)$, is the maximal number
of parses of a word in $S$.  It can be finite or infinite.

The set of \emph{internal factors} of a set of words $X$,
denoted $I(X)$,
is the set of words $w$ such that 
there exist nonempty words $u,v$ with $uwv\in X$.

Let $S$ be a recurrent set and let
 $X$ be a finite $S$-maximal bifix code of $S$-degree $d$.
A word $w\in S$ is such that
$\delta_X(w)< d$ if and only if it is an internal factor of $X$,
that is,
\begin{displaymath}
I(X)=\{w\in S\mid \delta_X(w)<d\}
\end{displaymath}
(Theorem 4.2.8 in~\cite{BerstelDeFelicePerrinReutenauerRindone2012}).
Thus any word of $S$ which is not a factor of $X$ has $d$ parses.
This implies that
the $S$-degree $d$ is finite.
\begin{example}\label{exampleUniform}
Let $S$ be a recurrent set. For any integer $n\ge 1$, the set
$S\cap A^n$ is an $S$-maximal bifix code of $S$-degree $n$.
\end{example}
The \emph{kernel} of a bifix code $X$ is the set $K(X)=I(X)\cap X$.
Thus it is the set of words of $X$ which are also internal factors of
$X$.
By Theorem 4.3.11
of~\cite{BerstelDeFelicePerrinReutenauerRindone2012},
 a finite
$S$-maximal bifix code is determined by its $S$-degree and its kernel.

\begin{example}\label{exampleKernel}
Let $S$ be the Fibonacci set.  The set $X=\{a,baab,bab\}$ is the unique
$S$-maximal bifix code of $S$-degree $2$ with kernel $\{a\}$. Indeed,
the word $bab$ is not an internal factor and has two parses,
namely $(1,bab,1)$ and $(b,a,b)$.
\end{example}

The following result shows that bifix codes have a natural
connection with interval exchange transformations.

\begin{proposition}\label{propBifixPartition}
If $X$ is a finite $S$-maximal bifix code,
with $S$ as in Proposition~\ref{propositionPrefixPartition}, the families $(I_w)_{w\in X}$ and $(J_w)_{w\in X}$
are ordered partitions of $[0,1[$, relatively to the orders $<_1$ and
$<_2$ respectively.
\end{proposition}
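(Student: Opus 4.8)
The plan is to reduce the statement to the two partition results already available for maximal prefix codes and maximal suffix codes, exploiting the fact that over a recurrent set a finite $S$-maximal bifix code is at the same time an $S$-maximal prefix code and an $S$-maximal suffix code.

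First I would observe that, since $T$ is minimal, the set $S=F(T)$ is uniformly recurrent by Proposition~\ref{propositionRegularUR}, hence in particular recurrent. Therefore, by the characterization recalled just after the definition of $S$-maximal bifix codes (Theorem 4.2.2 of~\cite{BerstelDeFelicePerrinReutenauerRindone2012}), the finite $S$-maximal bifix code $X$ is an $S$-maximal prefix code. Applying the left--right mirror of that characterization — equivalently, replacing $S$ by the reversal set $\tilde S$, which is again recurrent, and $X$ by the reversal code $\tilde X$, which is a finite $\tilde S$-maximal bifix code — one gets that $X$ is also an $S$-maximal suffix code.

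Then I would conclude in two symmetric steps. Regarding $X$ as a finite $S$-maximal prefix code, Proposition~\ref{propositionPrefixPartition} yields directly that $(I_w)_{w\in X}$ is a partition of $[0,1[$ that is ordered for $<_1$ (by Proposition~\ref{propIu}). Regarding $X$ as a finite $S$-maximal suffix code, I would invoke the symmetric statement recorded right before this proposition: its proof is the verbatim analogue of that of Proposition~\ref{propositionPrefixPartition}, with $I_w$, $<_1$ and Proposition~\ref{propIu} replaced by $J_w$, $<_2$ and Proposition~\ref{propJu}, and using $\sum_{w\in X}|J_w|=\sum_{w\in X}|I_w|=1$, which follows from Equation~\eqref{eqJw2} together with Proposition~\ref{propositionProbMax}. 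This gives that $(J_w)_{w\in X}$ is a partition of $[0,1[$ that is ordered for $<_2$.

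The only point that requires a little care is the transfer from ``$S$-maximal bifix code'' to ``$S$-maximal suffix code'': one must check that the cited equivalence, stated on the prefix side, survives reversal, i.e. that $\tilde S$ is recurrent when $S$ is and that $\tilde X$ is a bifix code of $\tilde S$ of the same maximality type. This is entirely routine, so I do not anticipate a genuine obstacle; the real content of the proposition is already packaged in Propositions~\ref{propositionPrefixPartition}, \ref{propIu} and~\ref{propJu}, and nothing beyond assembling them is needed.
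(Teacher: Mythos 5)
Your proposal is correct and follows essentially the same route as the paper: the paper's proof likewise invokes Proposition~\ref{propositionPrefixPartition} and its symmetric counterpart, together with the fact that over a recurrent set a finite $S$-maximal bifix code is simultaneously an $S$-maximal prefix code and an $S$-maximal suffix code. The only difference is that you spell out the reversal argument and the measure computation $\sum_{w\in X}|J_w|=1$, which the paper leaves implicit.
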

\begin{proof}
This results from Proposition~\ref{propositionPrefixPartition}
and its symmetric
and from the fact that, since $S$ is recurrent, a finite $S$-maximal bifix code
is both an $S$-maximal prefix code and an $S$-maximal suffix code.
\end{proof}
Let $T$ be a regular interval exchange transformation
relative to  $(I_a)_{a\in A}$. Let $(\alpha_a)_{a\in A}$ be the
translation values of $T$.
Set $S=F(T)$.
Let $X$ be a finite $S$-maximal bifix code on the alphabet $A$.

Let $T_X$ be the transformation on $[0,1[$ defined by
\begin{displaymath}
T_X(z)=T^{|u|}(z)\quad\text{if}\quad z\in I_{u}
\end{displaymath}
with $u\in X$. The transformation is well-defined since, by
Proposition
\ref{propBifixPartition}, the family $(I_u)_{u\in X}$ is a partition of $[0,1[$.

Let $f:B^*\rightarrow A^*$ be a coding morphism for $X$.
Let $(K_b)_{b\in B}$ be the family of semi-intervals indexed by the alphabet
$B$
with $K_b=I_{f(b)}$. We consider $B$ as ordered
by the orders $<_1$ and $<_2$ induced by $f$.
Let $T_f$ be the interval exchange transformation
relative to  $(K_b)_{b\in B}$. Its translation
values are $\beta_b=\sum_{j=0}^{m-1}\alpha_{a_j}$
for $f(b)=a_0a_1\cdots a_{m-1}$. The transformation $T_f$ is called the 
\emph{transformation associated} with $f$. 

\begin{proposition}\label{propositionTransformation}
Let $T$ be a regular interval exchange transformation relative
to $(I_a)_{a\in A}$ and let $S=F(T)$. If $f:B^*\rightarrow A^*$ is a coding
morphism for a finite $S$-maximal bifix code $X$,
one has $T_f=T_X$.
\end{proposition}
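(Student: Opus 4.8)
The plan is to show that the two transformations $T_f$ and $T_X$ agree pointwise on $[0,1[$. Both are defined via the same partition: by Proposition~\ref{propBifixPartition} the family $(I_u)_{u\in X}$ is an ordered partition of $[0,1[$ for the order $<_1$, and $(K_b)_{b\in B}=(I_{f(b)})_{b\in B}$ is exactly this partition re-indexed through the coding morphism $f$. So the first step is to fix $z\in[0,1[$ and let $u\in X$ be the unique word with $z\in I_u$; write $b=f^{-1}(u)\in B$, so that equivalently $z\in K_b$. By definition $T_X(z)=T^{|u|}(z)$.

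The second step is to compute $T_f(z)$ from the definition of an interval exchange transformation and check it equals $T^{|u|}(z)$. Here $T_f$ is the interval exchange transformation relative to $(K_b)_{b\in B}$; by construction its translation value on $K_b$ is $\beta_b=\sum_{j=0}^{m-1}\alpha_{a_j}$ where $f(b)=u=a_0a_1\cdots a_{m-1}$, i.e. $\beta_b=\alpha_u$ in the notation of Equation~\eqref{eqJw2}. Hence $T_f(z)=z+\alpha_u$. On the other hand, $z\in I_u$, and Equation~\eqref{eqJw2} together with the remark following it states precisely that the restriction of $T^{|u|}$ to $I_u$ is the translation by $\alpha_u$, so $T^{|u|}(z)=z+\alpha_u$. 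Therefore $T_f(z)=z+\alpha_u=T^{|u|}(z)=T_X(z)$.

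There is one point that needs a small argument rather than being routine: one must know that $T_f$, as an abstract interval exchange transformation relative to $(K_b)_{b\in B}$, has its translation value on the semi-interval $K_b$ equal to the prescribed $\beta_b$ — that is, that the translation values in the definition of an interval exchange transformation built from $(K_b)$ ordered by $<_1$ and $<_2$ come out as $\beta_b$ and not something else. This is essentially the content of how the orders $<_1,<_2$ on $B$ are induced by $f$ (via Propositions~\ref{propIu} and~\ref{propJu}, the order $<_1$ on the $K_b$ matches the order $<_1$ on $X$ and the order $<_2$ on the $J_{f(b)}$ matches $<_2$ on $X$), so the $\mu$- and $\nu$-boundaries of $K_b$ agree with those of $I_{f(b)}$ and $J_{f(b)}$, forcing $\alpha_b^{T_f}=\nu_b-\mu_b=\alpha_{f(b)}=\beta_b$. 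I expect this bookkeeping about orders and boundaries to be the main (though minor) obstacle; once it is in place the identification $T_f=T_X$ is immediate from Equation~\eqref{eqJw2}.
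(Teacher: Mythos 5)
Your proof is correct and follows essentially the same route as the paper's: both rest on Proposition~\ref{propBifixPartition} (the two ordered partitions $(I_w)_{w\in X}$ and $(J_w)_{w\in X}$) together with Equation~\eqref{eqJw2} ($J_w=I_w+\alpha_w$) to identify the translation values of the abstract interval exchange $T_f$ with the $\beta_b=\alpha_{f(b)}$, whence $T_f=T_X$. The "small argument" you flag about the $\mu$- and $\nu$-boundaries is exactly the content the paper compresses into its one-sentence conclusion, so your version is just a slightly more explicit rendering of the same proof.
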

\begin{proof} By 
Proposition~\ref{propBifixPartition}, the family  $(K_b)_{b\in B}$
is a partition of $[0,1[$ ordered by $<_1$.
For any $w\in X$, we have by Equation~\eqref{eqJw2}
$J_w=I_w+\alpha_w$
and thus $T_X$ is the interval exchange transformation relative to
$(K_b)_{b\in B}$ with translation values $\beta_b$.
\end{proof}
In the sequel, 
under the hypotheses of Proposition~\ref{propositionTransformation},
we consider $T_f$ as an interval exchange transformation. In
particular,
the natural coding of $T_f$ relative to $z\in [0,1[$ 
is well-defined.

\begin{example}\label{exampleBifixDegree2}
Let $S$ be the Fibonacci set. It is the set of factors of the
Fibonacci word, which is a natural coding of the rotation of
angle $\alpha=(3-\sqrt{5})/2$ relative to $\alpha$
(see Example~\ref{exampleFibonacciAlpha}).
Let $X=\{aa,ab,ba\}$ and let $f$ be the coding morphism defined by 
$f(u)=aa$, $f(v)=ab$, $f(w)=ba$.  
The two partitions of $[0,1[$ corresponding to $T_f$ are
\begin{displaymath}
I_{u}=[0,1-2\alpha[,\quad I_{v}=[1-2\alpha,1-\alpha[\quad I_{w}=[1-\alpha,1[
\end{displaymath}
and
\begin{displaymath}
J_{v}=[0,\alpha[,\quad J_{w}=[\alpha,2\alpha[\quad J_{u}=[2\alpha,1[.
\end{displaymath}
The transformation $T_f$ is represented in Figure~\ref{figure3interval}.
\begin{figure}[hbt]
\centering\gasset{Nh=2,Nw=2,ExtNL=y,NLdist=2,AHnb=0,ELside=r}
\begin{picture}(100,15)
\node[fillcolor=red](0h)(0,10){$0$}
\node[fillcolor=blue](1-2alpha)(23.6,10){$1-2\alpha$}
\node[fillcolor=green](1-alpha)(61.8,10){$1-\alpha$}
\node(1h)(100,10){$1$}
\drawedge[linecolor=red,linewidth=1](0h,1-2alpha){$u$}
\drawedge[linecolor=blue,linewidth=1](1-2alpha,1-alpha){$v$}
\drawedge[linecolor=green,linewidth=1](1-alpha,1h){$w$}

\node[fillcolor=blue](0b)(0,0){$0$}
\node[fillcolor=green](alpha)(38.2,0){$\alpha$}
\node[fillcolor=red](2alpha)(76.4,0){$2\alpha$}\node(1b)(100,0){$1$}
\drawedge[linecolor=blue,linewidth=1](0b,alpha){$v$}
\drawedge[linecolor=green,linewidth=1](alpha,2alpha){$w$}
\drawedge[linecolor=red,linewidth=1](2alpha,1b){$u$}
\end{picture}
\caption{The transformation $T_f$.}\label{figure3interval}
\end{figure}
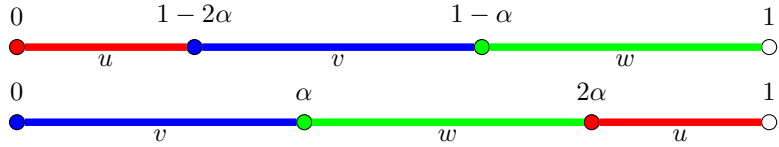
It is actually a representation on $3$ intervals of the rotation of angle
$2\alpha$.
Note that the point $z=1-\alpha$ is a separation point which is not
a singularity of $T_f$.
\begin{table}[hbt]
\begin{displaymath}
\begin{array}{|l|l|}\hline
(X,<_1) & (X,<_2)\\ \hline
aa,ab,ba&ab,ba,aa\\ \hline
a,baab,bab&bab,baab,a\\\hline
aa,aba,b&b,aba,aa\\ \hline
\end{array}
\end{displaymath}
\caption{The two orders on the three $S$-maximal
bifix codes of $S$-degree $2$.}\label{tableExchange2}
\end{table}
The first row of Table~\ref{tableExchange2} gives
the two orders on $X$. The next two rows give
the two orders for each of the two other $S$-maximal
bifix codes of $S$-degree $2$ (there are actually exactly three
$S$-maximal
bifix codes of $S$-degree $2$ in the Fibonacci set, see~\cite{BerstelDeFelicePerrinReutenauerRindone2012}).
\end{example}

Let $T$ be a minimal interval exchange transformation on the alphabet $A$.
Let $x$ be the natural coding of $T$
relative to some $z\in[0,1[$.
Set $S=F(x)$.
Let $X$ be a finite $S$-maximal bifix code. 
Let $f:B^*\rightarrow A^*$ be a morphism which maps bijectively
$B$ onto $X$. Since $S$ is recurrent, the set $X$
is an $S$-maximal prefix code. Thus $x$ has a prefix $x_0\in X$.
Set $x=x_0x'$. In the same way $x'$ has a prefix $x_1$
in $X$. Iterating this argument, we see that $x=x_0x_1\cdots$ with
$x_i\in X$. Consequently, there exists an
infinite word
$y$  on the alphabet $B$ such
that $x=f(y)$. The word $y$ is the \emph{decoding} of the
infinite
word $x$ with respect to $f$. 
\begin{proposition}\label{propositionDecoding}
The decoding of $x$ with respect to $f$ is the natural coding of
the transformation associated  with $f$ relative to $z$:
$\Sigma_T(z)=f(\Sigma_{T_f}(z))$. 
\end{proposition}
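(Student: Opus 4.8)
The plan is to verify the identity $\Sigma_T(z) = f(\Sigma_{T_f}(z))$ by chasing the definitions of natural coding through the two transformations $T$ and $T_f$, using that $T_f = T_X$ (Proposition~\ref{propositionTransformation}) and that $T_X$ advances by the block-length $|u|$ when the current point lies in $I_u$, $u\in X$. First I would recall that the left-hand side is the word $x = \Sigma_T(z)$, and that by the argument just preceding the statement we may factor $x = x_0 x_1 x_2\cdots$ with each $x_i \in X$, so that $x = f(y)$ for a unique $y = \Sigma(y)$ over $B$; what must be shown is precisely that $y = \Sigma_{T_f}(z)$, i.e.\ that $y_n = b$ in $B$ exactly when $T_f^{\,n}(z) \in K_b = I_{f(b)}$.

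The key step is to track the position in $[0,1[$ after reading the first $n$ blocks. Set $m_n = |x_0| + |x_1| + \cdots + |x_{n-1}|$ (with $m_0 = 0$). I claim that $T_f^{\,n}(z) = T^{m_n}(z)$ for all $n \ge 0$. This follows by induction on $n$: it is trivial for $n = 0$, and for the inductive step, $T_f^{\,n+1}(z) = T_f\big(T_f^{\,n}(z)\big) = T_f\big(T^{m_n}(z)\big)$; since $T^{m_n}(z) \in I_{x_n}$ (which is exactly the content of~\eqref{eqIw} applied at position $m_n$: the length-$|x_n|$ factor of $x$ starting at $m_n$ is $x_n$), the definition $T_X(w) = T^{|u|}(w)$ for $w \in I_u$ together with $T_f = T_X$ gives $T_f\big(T^{m_n}(z)\big) = T^{|x_n|}\big(T^{m_n}(z)\big) = T^{m_{n+1}}(z)$, completing the induction. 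Here I am using that $(I_u)_{u\in X}$ is a genuine partition of $[0,1[$ (Proposition~\ref{propBifixPartition}), so that ``$z \in I_u$'' picks out exactly one $u \in X$ for each point, hence $x_n$ is the unique block of $X$ with $T^{m_n}(z) \in I_{x_n}$.

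It then remains to identify $x_n$ with $f(y_n)$ and to check the coding condition. By construction $x_n = f(b)$ for the letter $b = y_n \in B$, and we have just shown $T_f^{\,n}(z) = T^{m_n}(z) \in I_{x_n} = I_{f(b)} = K_b$; conversely, since $(K_b)_{b\in B}$ is a partition, $T_f^{\,n}(z) \in K_b$ forces $b = y_n$. Thus $y = \Sigma_{T_f}(z)$ by the very definition of natural coding, and applying $f$ gives $f(\Sigma_{T_f}(z)) = f(y) = x = \Sigma_T(z)$, as desired.

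I do not expect a serious obstacle here; the only point requiring a little care is the bookkeeping that the length-$|x_n|$ factor of $x = \Sigma_T(z)$ starting at position $m_n$ is indeed $x_n$, which is immediate from the factorization $x = x_0 x_1 \cdots$ and equivalence~\eqref{eqIw}, and the invocation of Proposition~\ref{propBifixPartition} to guarantee both $(I_u)_{u\in X}$ and $(K_b)_{b\in B}$ are partitions so that the coding conditions are unambiguous. All of this relies on $T$ being minimal (so that $S = F(T)$ and $X$ is an $S$-maximal prefix code, yielding the block factorization) and regular (needed for Proposition~\ref{propBifixPartition} via Proposition~\ref{propositionPrefixPartition}).
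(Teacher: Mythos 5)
Your argument is correct and follows the paper's proof essentially verbatim: the same induction establishing $T_f^{\,n}(z)=T^{|x_0\cdots x_{n-1}|}(z)$ via~\eqref{eqIw} and Proposition~\ref{propositionTransformation}, followed by the same identification of the $n$-th letter of the decoding through the partition $(K_b)_{b\in B}$. No differences worth noting.
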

\begin{proof}
Let $y=b_0b_1\cdots$ be the decoding of $x$ with respect to $f$. 
Set $x_i=f(b_i)$ for $i\ge 0$. Then,
for any $n\ge 0$, we have
\begin{equation}
T_f^n(z)=T^{|u_n|}(z) \label{eqTXn}
\end{equation}
with $u_n=x_0\cdots x_{n-1}$ (note that $|u_n|$ denotes the length of
$u_n$ with respect to the alphabet $A$).
Indeed, this is is true for $n=0$. Next
$T_f^{n+1}(z)=T_f(t)$ with $t=T_f^n(z)$.
Arguing by induction, we have $t=T^{|u_n|}(z)$. 
Since $x=u_nx_nx_{n+1}\cdots$, 
$t$ is in $I_{x_n}$ by \eqref{eqIw}. Thus
by Proposition~\ref{propositionTransformation}, 
$T_f(t)=T^{|x_n|}(t)$ and we obtain
$T_f^{n+1}(z)=T^{|x_n|}(T^{|u_n|}(z))=T^{|u_{n+1}|}(z)$
proving \eqref{eqTXn}.
Finally, for $u=f(b)$ with $b\in B$,
\begin{displaymath}
b_n=b \Longleftrightarrow x_n=u\Longleftrightarrow
T^{|u_n|}(z)\in I_{u}\Longleftrightarrow
T_f^n(z)\in I_{u}=K_b
\end{displaymath}
showing that $y$ is the natural coding of $T_f$ relative to $z$.

\end{proof}
\begin{example}
 Let $T,\alpha,X$ and $f$ be as in Example~\ref{exampleBifixDegree2}.
Let $x=abaababa\cdots$ be the Fibonacci word. We have $x=\Sigma_T(\alpha)$.
 The decoding of $x$ with respect to $f$
is
$y=vuwwv\cdots$.
\end{example}

\subsection{Bifix codes and regular transformations}
The following result shows that, for the coding morphism
$f$ of a finite $S$-maximal bifix code, the map $T\mapsto T_f$ 
preserves the regularity of the transformation.

\begin{theorem}\label{theoremMinimal}
Let $T$ be a regular interval exchange transformation and let $S=F(T)$.
For any finite $S$-maximal bifix code $X$ with coding morphism $f$,
the transformation
$T_f$ is regular.
\end{theorem}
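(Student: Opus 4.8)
The plan is to show that $T_f$ is regular by analyzing the separation points of $T_f$ and their orbits. Recall from Proposition~\ref{propBifixPartition} that, writing $X=\{w_1,\dots,w_k\}$ with $w_1<_1\cdots<_1 w_k$, the family $(I_{w_i})_{i}$ is an ordered partition of $[0,1[$, so the separation points of $T_f$ are $0$ together with the right boundaries $\mu_{w_1},\dots,\mu_{w_{k-1}}$ of the semi-intervals $I_{w_i}$. Each such $\mu_{w_i}$ is, by definition of $I_{w_i}$ via Equation~\eqref{eqIu}, a right boundary of some $T^{-j}(I_a)$ for $0\le j\le |w_i|-1$, hence equals $T^{-j}(\mu_a)$ for some nonzero separation point $\mu_a$ of $T$ (one must check here that it cannot be $T^{-j}(0)$: this is where regularity of $T$ enters, since $0=T(\mu_{i_0})$ lies on the orbit of a nonzero separation point, so every point of the $T$-orbit of $0$ is already accounted for among the orbits of $\mu_1,\dots,\mu_{s-1}$). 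Thus every separation point of $T_f$ lies on a $T$-orbit of a nonzero separation point of $T$, and since those orbits are infinite by regularity of $T$, the $T$-orbit — and a fortiori any orbit — of each $\mu_{w_i}$ is infinite.

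The first real step is to verify that the orbit of $\mu_{w_i}$ under $T_f$ is infinite. The point is that $T_f$ moves points along $T$-orbits: by Proposition~\ref{propositionTransformation} (i.e.\ $T_f=T_X$) and Equation~\eqref{eqTXn} in the proof of Proposition~\ref{propositionDecoding}, for every $z$ and every $n$ there is an integer $m=m(z,n)\ge 0$ with $T_f^n(z)=T^m(z)$, and similarly for $T_f^{-1}$ using that $T_f^{-1}=(T_f)_{\text{inv}}$ is again built from $T^{-1}$. So the $T_f$-orbit of a point is contained in its $T$-orbit; since a separation point $\mu_{w_i}$ of $T_f$ has infinite $T$-orbit, I still need that it hits infinitely many distinct points under $T_f$ specifically. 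Here I would invoke minimality of $T$ (Theorem~\ref{theoremKeane}, since $T$ is regular): were the $T_f$-orbit of $\mu_{w_i}$ finite, $\mu_{w_i}$ would be $T_f$-periodic, and a short argument with Equation~\eqref{eqJw2} — each restriction of a power of $T$ to an $I_w$ is a translation — would force a translation by a sum of $\alpha_a$'s to be zero on an interval, contradicting that $F(T)$ is infinite (equivalently, contradicting minimality of $T$). So all separation-point orbits of $T_f$ are infinite.

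The second, and I expect the harder, step is disjointness: if $\mu_{w_i}$ and $\mu_{w_j}$ ($i\ne j$) had a common point in their $T_f$-orbits, then they would lie on a common $T$-orbit, i.e.\ $T^p(\mu_{w_i})=T^q(\mu_{w_j})$ for some integers $p,q$. Now $\mu_{w_i}=T^{-r}(\mu_{a})$ and $\mu_{w_j}=T^{-s}(\mu_{b})$ for nonzero separation points $\mu_a,\mu_b$ of $T$, so we would get $\mu_a$ and $\mu_b$ on a common $T$-orbit; by regularity of $T$ this forces $a=b$ and $r=s$, whence $\mu_{w_i}=\mu_{w_j}$, so $I_{w_i}$ and $I_{w_j}$ share the boundary point $\mu_{w_i}=\mu_{w_j}$, contradicting that $(I_w)_{w\in X}$ is an ordered partition with $w_i\ne w_j$ consecutive or not. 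The delicate bookkeeping is the passage "$\mu_{w_i}=T^{-r}(\mu_a)$ with $\mu_a$ nonzero and $r$ uniquely determined": I would extract this carefully from Equation~\eqref{eqIu}, noting that $I_{w_i}=I_{b_0}\cap T^{-1}(I_{b_1})\cap\cdots$, so its right endpoint $\mu_{w_i}$ is $\min_{0\le j<|w_i|} T^{-j}(\text{right endpoint of } I_{b_j} \text{ or } J_{b_{j-1}}\ldots)$ — one has to track which translate realizes the minimum and confirm it is a nonzero separation point using that $0$'s $T$-orbit coincides with orbits already present. This endpoint-tracking lemma is the crux; once it is in place, regularity of $T_f$ follows by combining it with the infinite-orbit step and the disjoint-orbit property of $T$.

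**Main obstacle.** The main obstacle is the endpoint-tracking lemma: showing cleanly that every nonzero separation point of $T_f$ is of the form $T^{-r}(\mu)$ for a \emph{nonzero} separation point $\mu$ of $T$, with $r$ and $\mu$ essentially unique, and that this representation interacts correctly with the disjointness hypothesis on $T$'s orbits. Everything else (infinite orbits via minimality, the final contradiction with the ordered partition) is then bookkeeping.
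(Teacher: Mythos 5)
Your overall framing---reduce regularity of $T_f$ to statements about the $T$-orbits of its separation points, using that each nonzero separation point of $T_f$ has the form $T^{-r}(\mu_a)$ for a nonzero separation point $\mu_a$ of $T$ with $0\le r<|w_i|$---matches the paper's proof, which works symmetrically with the left boundaries $\delta_x$ of the semi-intervals $J_x$ and the representation $\delta_x=T^k(\delta_i)$, $0\le k<|x|$, extracted from Equation~\eqref{eqJu}. Your infinite-orbit step is correct, though more elaborate than necessary: a finite $T_f$-orbit of $z$ gives $T_f^p(z)=z$ with $p>0$, hence $T^m(z)=z$ with $m\ge p>0$, contradicting directly that $z$ lies on the infinite $T$-orbit of some $\mu_a$; no appeal to minimality or to vanishing translation values is needed.

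The disjointness step, however, contains a genuine gap, and it sits exactly where the real content of the theorem lies. From $T^p(\mu_{w_i})=T^q(\mu_{w_j})$ with $\mu_{w_i}=T^{-r}(\mu_a)$ and $\mu_{w_j}=T^{-s}(\mu_b)$, regularity of $T$ gives $a=b$, and aperiodicity of $T$ on the infinite orbit of $\mu_a$ gives $p-r=q-s$; it does \emph{not} give $r=s$, so you cannot conclude $\mu_{w_i}=\mu_{w_j}$. Indeed, distinct separation points of $T_f$ typically \emph{do} lie on the same $T$-orbit: for $X=S\cap A^n$ they include $\mu_a, T^{-1}(\mu_a),\ldots,T^{-(n-1)}(\mu_a)$. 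So no contradiction can be derived from ``same $T$-orbit'' alone; what must be ruled out is that two such points lie on the same $T_f$-orbit, and this is precisely where the bifix hypothesis must enter --- your argument never uses it. The paper's proof supplies the missing ingredient symbolically: from $\delta_{x_p}=T_f^n(\delta_{x_q})$ one gets $\Sigma_T(\delta_i)=u\Sigma_T(\delta_{x_p})=vx\Sigma_T(\delta_{x_p})$ with $u,v$ proper suffixes of $x_p,x_q$ and $x\in X^*$, whence $u=vx$; since $X$ is a suffix code, the proper suffix $u$ of $x_p$ has no suffix in $X$, forcing $x=\varepsilon$, hence $n=0$ and $p=q$. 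Some argument of this kind is indispensable and is absent from your proposal. By contrast, the ``endpoint-tracking lemma'' you single out as the main obstacle is routine; the paper dispatches its analogue in two lines.
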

\begin{proof}
Set $A=\{a_1,a_2,\ldots,a_s\}$ with $a_1<_1 a_2<_1\cdots <_1a_s$.
We denote  $\delta_i=\delta_{a_i}$. 
By hypothesis,  the orbits of $\delta_2,\ldots,\delta_s$
are infinite and disjoint.
Set $X=\{x_1,x_2,\ldots,x_t\}$ with $x_1<_1 x_2<_1\cdots<_1 x_t$.
Let $d$ be the $S$-degree of $X$.

For $x\in X$, denote by $\delta_x$ the left boundary of the
semi-interval $J_x$.
For each $x\in X$, it follows from Equation~\eqref{eqJu}
that there is an $i\in\{1,\ldots,s\}$ such that
$\delta_x=T^k(\delta_i)$ with $0\le k< |x|$. 
Moreover, we have $i=1$ if and only if $x=x_1$.
Since $T$ is regular, the index $i\ne 1$ and the integer $k$ are unique
for each $x\ne x_1$.
And for such $x$ and $i$, by \eqref{eqJw}, we have $\Sigma_T(\delta_i)=u\Sigma_T(\delta_x)$
with $u$ a proper suffix of $x$.

We  now  show that the orbits of
$\delta_{x_2},\ldots,\delta_{x_t}$  for the transformation $T_f$
are infinite and disjoint. Assume that $\delta_{x_p}=T_f^n(\delta_{x_q})$ for
some $p,q\in\{2,\ldots,t\}$ and $n\in \Z$. Interchanging $p,q$
if necessary, we may assume that $n\ge 0$.
Let $i,j\in\{2,\ldots,s\}$ be such that $\delta_{x_p}=
T^k(\delta_i)$ with $0\le k<|x_p|$
and  $\delta_{x_q}=
T^\ell(\delta_j)$ with $0\le \ell<|x_q|$. Since
$T^k(\delta_i)=T_f^n(T^{\ell}(\delta_j))=T^{m+\ell}(\delta_j)$
for some $m\ge 0$, we cannot have $i\ne j$ since
otherwise
the orbits of $\delta_i,\delta_j$ for the transformation $T$
intersect.
Thus $i=j$. Since $\delta_{x_p}=T^k(\delta_i)$,
 we have $\Sigma_T(\delta_i)=u\Sigma_T(\delta_{x_p})$
with $|u|=k$, and $u$ a proper suffix of $x_p$. And since 
$\delta_{x_p}=T_f^n(\delta_{x_q})$, we have 
$\Sigma_T(\delta_{x_q})=x\Sigma_T(\delta_{x_p})$ with $x\in X^*$.
Since on the other hand $\delta_{x_q}=T^\ell(\delta_i)$,
we have $\Sigma_T(\delta_i)=v\Sigma_T(\delta_{x_q})$ with $|v|=\ell$
and $v$ a proper suffix of $x_q$. We obtain
\begin{eqnarray*}
\Sigma_T(\delta_i)&=&u\Sigma_T(\delta_{x_p})\\
&=&v\Sigma_T(\delta_{x_q})=vx\Sigma_T(\delta_{x_p}).
\end{eqnarray*}
Since $|u|=|vx|$, this implies $u=vx$.
But since $u$ cannot have a suffix in
$X$, $u=vx$ implies $x=1$ and thus
$n=0$ and $p=q$. This concludes the proof.
\end{proof}
Let $f$ be a coding morphism for
a finite $S$-maximal bifix code $X\subset S$.
The set $f^{-1}(S)$ is called a \emph{maximal bifix decoding} of $S$.

\begin{theorem}\label{corollaryInverseImage}
The family of regular interval exchange sets is closed under maximal
bifix decoding.
\end{theorem}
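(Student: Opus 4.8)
The plan is to show that $f^{-1}(S)$ is exactly the interval exchange set $F(T_f)$ of the transformation $T_f$ associated with $f$; since $T_f$ is regular this gives the conclusion immediately. So the whole theorem reduces to the set equality $f^{-1}(S)=F(T_f)$.

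First I would collect what is already available. By Proposition~\ref{propositionTransformation}, $T_f$ is an interval exchange transformation on the alphabet $B$, with defining partition $(K_b)_{b\in B}$, $K_b=I_{f(b)}$; by Theorem~\ref{theoremMinimal} it is regular, hence minimal by Theorem~\ref{theoremKeane}. Consequently, writing $K_y$ for the semi-interval attached to a word $y$ on $B$ by~\eqref{eqIu} relative to $T_f$, the set $F(T_f)$ does not depend on the base point and $y\in F(T_f)$ if and only if $K_y\neq\emptyset$. Symmetrically, $T$ is regular hence minimal, so $S=F(T)$ and, for a word $w$ on $A$, one has $w\in S$ if and only if $I_w\neq\emptyset$.

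The one computation to carry out is the identity
\begin{displaymath}
K_y=I_{f(y)}\qquad\text{for every }y\in B^{*}.
\end{displaymath}
I would prove this by induction on $|y|$ along the lines of the proof of Proposition~\ref{propositionDecoding}: if $y=b_0b_1\cdots b_{m-1}$ and $z\in K_y$, then $T_f^{\,j}(z)=T^{|f(b_0\cdots b_{j-1})|}(z)$ for $0\le j\le m$, exactly as in~\eqref{eqTXn}, because the restriction of $T_f$ to each $K_b=I_{f(b)}$ is the translation $T^{|f(b)|}$; feeding this into the intersection~\eqref{eqIu} defining $K_y$ and using the identity $I_{uv}=I_u\cap T^{-|u|}(I_v)$ from the proof of Proposition~\ref{propIu}, the intersection collapses to $I_{f(b_0)f(b_1)\cdots f(b_{m-1})}=I_{f(y)}$.

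With this in hand the argument closes: $y\in f^{-1}(S)$ means $f(y)\in S$, which by minimality of $T$ holds iff $I_{f(y)}\neq\emptyset$, i.e. iff $K_y\neq\emptyset$, i.e. (by minimality of $T_f$) iff $y\in F(T_f)$. Hence $f^{-1}(S)=F(T_f)$ is a regular interval exchange set, and letting $f$ run over the coding morphisms of finite $S$-maximal bifix codes yields the theorem. I do not expect a genuine obstacle: the substantive content — that $T_f$ is a well-defined interval exchange transformation and that it is regular — is already packaged in Proposition~\ref{propositionTransformation} and Theorem~\ref{theoremMinimal}; the only point needing care is the bookkeeping behind $K_y=I_{f(y)}$ together with the twofold use of minimality (of $T$ and of $T_f$, via Keane's theorem) to pass between words and nonempty semi-intervals.
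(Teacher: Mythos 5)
Your proposal is correct, and its skeleton coincides with the paper's: both reduce the theorem to the set equality $f^{-1}(S)=F(T_f)$, with the regularity of $T_f$ supplied by Theorem~\ref{theoremMinimal} and minimality by Keane's theorem. The difference lies in how that equality is obtained. The paper argues at the level of infinite words: it takes $x=\Sigma_T(z)$, and for $w\in f^{-1}(S)$ with $v=f(w)$ a factor of $x$ it shifts the base point to $z'=T^{|u|}(z)$, observes that $w$ is a prefix of $y'=f^{-1}(x')$ (this uses unique deciphering for the prefix code $X$), and invokes Proposition~\ref{propositionDecoding} together with minimality of $T_f$ to conclude $w\in F(y')=F(T_f)$. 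You instead work at the level of semi-intervals, proving the identity $K_y=I_{f(y)}$ for all $y\in B^*$ and then translating both memberships into nonemptiness of intervals via the criterion ``$w$ is a factor iff $I_w\neq\emptyset$'' for minimal transformations. Your identity is correct and is proved by exactly the induction underlying~\eqref{eqTXn}, so the two arguments have the same mathematical content; your version has the mild advantage of making both uses of minimality (for $T$ and for $T_f$) explicit and of avoiding the base-point shift, while the paper's version avoids introducing the intervals $K_y$ for words of length $\ge 2$. Either way the proof is complete.
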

\begin{proof}
Let $T$ be a regular interval exchange transformation such that
$S=F(T)$.
By Theorem~\ref{theoremMinimal},  $T_f$ is a regular interval
exchange transformation. We show that $f^{-1}(S)=F(T_f)$, which
implies
the conclusion.

 Let
$x=\Sigma_T(z)$ for some $z\in [0,1[$ and let $y=f^{-1}(x)$. 
Then $S=F(x)$ and $F(T_f)=F(y)$. For any   $w\in F(y)$, we have $f(w)\in F(x)$
and thus $w\in f^{-1}(S)$. This shows that $F(T_f)\subset f^{-1}(S)$.
Conversely, let $w\in f^{-1}(S)$ and let $v=f(w)$. Since $S=F(x)$,
there
is a word $u$ such that $uv$ is a prefix of $x$. Set $z'=T^{|u|}(z)$
and $x'=\Sigma_T(z')$. Then $v$ is a prefix of $x'$ and $w$
is a prefix of $y'=f^{-1}(x')$. Since $T_f$ is regular, it is minimal
and thus $F(y')=F(T_f)$. This implies that $w\in F(T_f)$.
\end{proof}

Since a regular interval exchange set is uniformly recurrent,
Theorem~\ref{corollaryInverseImage} implies in particular
that if $S$ is a regular interval exchange set and $f$ a coding
morphism of a finite $S$-maximal bifix code, then $f^{-1}(S)$
is uniformly recurrent. This is not true for an arbitrary uniformly
recurrent set $S$, as shown by the following example.

\begin{example}
Set $A=\{a,b\}$ and $B=\{u,v\}$.
Let $S$ be the set of factors of $(ab)^*$ and let $f:B^*\rightarrow
A^*$
be defined 
by $f(u)=ab$ and $f(v)=ba$. Then $f^{-1}(S)=u^*\cup v^*$ which is not recurrent.
\end{example}

We illustrate the proof of Theorem~\ref{theoremMinimal}
 in the following example.
\begin{example}\label{exampleBifixmaxDegree3}
Let $T$ be the rotation of angle $\alpha=(3-\sqrt{5})/2$.
The set $S=F(T)$ is the Fibonacci set.
Let $X=\{a,baab,babaabaabab,babaabab\}$. The set $X$ is an $S$-maximal
bifix code of $S$-degree $3$ 
(see~\cite{BerstelDeFelicePerrinReutenauerRindone2012}). 
The values of the $\mu_{x_i}$ (which are the right boundaries
of the intervals $I_{x_i}$) and
$\delta_{x_i}$ are represented in Figure~\ref{exampleDegree3}.
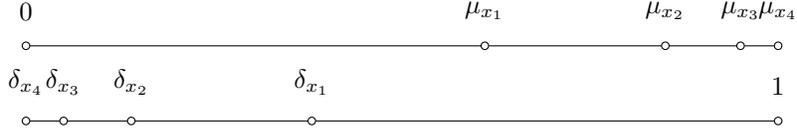
\begin{figure}[hbt]
\centering
\gasset{Nw=1,Nh=1,AHnb=0,ExtNL=y,NLdist=3}
\begin{picture}(100,20)
\node(0h)(0,10){$0$}\node(1h)(61,10){$\mu_{x_1}$}\node(2h)(85,10){$\mu_{x_2}$}\node(3h)(95,10){$\mu_{x_3}$}
\node(4h)(100,10){$\mu_{x_4}$}
\node(0b)(0,0){$\delta_{x_4}$}\node(1b)(5,0){$\delta_{x_3}$}\node(2b)(14,0){$\delta_{x_2}$}\node(3b)(38,0){$\delta_{x_1}$}\node(4b)(100,0){$1$}

\drawedge(0h,1h){}\drawedge(1h,2h){}\drawedge(2h,3h){}\drawedge(3h,4h){}
\drawedge(0b,1b){}\drawedge(1b,2b){}\drawedge(2b,3b){}\drawedge(3b,4b){}
\end{picture}
\caption{The transformation associated with a bifix code of $S$-degree
  $3$.}
\label{exampleDegree3}
\end{figure}

The infinite word $\Sigma_T(0)$ is represented in
Figure~\ref{exampleDegree3b}.
The value indicated on the word $\Sigma_T(0)$ after a prefix $u$ is 
$T^{|u|}(0)$. The three values
$\delta_{x_4},\delta_{x_2},\delta_{x_3}$ correspond to the three
prefixes
of $\Sigma_T(0)$ which are proper suffixes of $X$.
\begin{figure}[hbt]
\centering
\gasset{Nw=1,Nh=1,AHnb=0}
\begin{picture}(50,18)(0,-8)
\node[Nframe=n](-1)(-10,0){$\Sigma_T(0)$=}
\node[Nframe=n](0h)(0,8){$\delta_{x_4}$}
\node[Nframe=n](0)(0,0){}
\node[Nframe=n](a1)(3,0){$a$}
\node[Nframe=n](1)(6,0){}\node[Nframe=n]((a2)(9,0){$a$}
\node[Nframe=n](2)(12,0){}\node[Nframe=n]((a3)(15,.3){$b$}
\node[Nframe=n](3h)(18,8){$\delta_{x_2}$}
\node[Nframe=n](3)(18,0){}
\node[Nframe=n]((a4)(21,0){$a$}
\node[Nframe=n](4)(24,0){}\node[Nframe=n]((a5)(27,0){$a$}
\node[Nframe=n](5)(30,0){}\node[Nframe=n]((a3)(33,.3){$b$}
\node[Nframe=n](6)(36,0){}\node[Nframe=n]((a4)(39,0){$a$}
\node[Nframe=n](7)(42,0){}\node[Nframe=n]((a5)(45,.3){$b$}
\node[Nframe=n](8h)(48,8){$\delta_{x_3}$}
\node[Nframe=n](8)(48,0){}
\node[Nframe=n]((a6)(51,0){$a$}
\node[Nframe=n](9)(54,0){}\node[Nframe=n](a7)(57,0){$\cdots$}
\drawedge[dash={0.2 0.5}0](0,0h){}
\drawedge[dash={0.2 0.5}0](3h,3){}
\drawedge[dash={0.2 0.5}0](8h,8){}
\end{picture}
\caption{The infinite word $\Sigma_T(0)$.}
\label{exampleDegree3b}
\end{figure}
\end{example}
The following example shows that Theorem~\ref{corollaryInverseImage}
is not true when $X$ is not bifix.
\begin{example}\label{exampleNeutralG}
Let $S$ be the Fibonacci set and let $X=\{aa,ab,b\}$. The set
$X$ is an $S$-maximal prefix code. Let $B=\{u,v,w\}$
and let $f$ be the coding morphism for $X$ defined by $f(u)=aa$,
$f(v)=ab$, $f(w)=b$. The set $W=f^{-1}(S)$ is not an interval exchange
set. Indeed, we have $vu,vv,wu,wv\in W$. This implies that both
$J_v$ and $J_w$ meet $I_u$ and $I_v$, which is impossible
in an interval exchange transformation.
\end{example}
\section{Tree sets}\label{sectionTreePlanarTree}
We  introduce in this section the notions of 
tree sets and planar tree sets. We first introduce the notion of extension graph which describes
the possible two-sided extensions of a word.
\subsection{Extension graphs}

Let $S$ be a biextendable set of words.
 For $w\in S$,
we denote
\begin{displaymath}
L(w)=\{a\in A\mid aw\in S\},\quad
R(w)=\{a\in A\mid wa\in S\}
\end{displaymath}
and
\begin{displaymath}
E(w)=\{(a,b)\in A\times A\mid awb\in S\}.
\end{displaymath}
For  $w\in S$,  the \emph{extension graph}
of $w$ is the undirected bipartite graph $G(w)$ on the set of vertices
which is the disjoint union of two copies of
$L(w)$ and $R(w)$ with edges the pairs 
$(a,b)\in E(w)$.

Recall that an undirected graph is a tree if it is connected and acyclic.

Let $S$ be a  biextendable set.  We say that $S$ is a \emph{tree set}
if the graph $G(w)$ is a tree for all $w\in S$.

Let $<_1$ and $<_2$ be two orders on $A$. For a set $S$ and
a word $w\in S$, we say that
the
graph $G(w)$ is \emph{compatible} with the orders $<_1$ and $<_2$ if 
for any $(a,b),(c,d)\in E(w)$, one has
\begin{displaymath}
a<_1 c\Longrightarrow b\le_2 d.
\end{displaymath}
Thus, placing the vertices of $L(w)$ ordered by $<_1$ on a line and those of
$R(w)$ ordered by $<_2$ on a parallel line, the edges of
the graph may be drawn 
as straight noncrossing segments, resulting in a planar graph.

We say that a biextendable set $S$ is a \emph{planar tree set} 
with respect to two orders $<_1$ and $<_2$ on $A$ if
for any $w\in S$, the graph $G(w)$ is a tree compatible with $<_1,<_2$.
Obviously, a planar tree set is a tree set.

The following example shows that the Tribonacci set is not a planar tree set.
\begin{example}\label{exampleTribonacciPasPlanaire}
Let $S$ be the Tribonacci set (see Example~\ref{exampleTribonacci}).
The words $a,aba$ and $abacaba$ are bispecial. Thus
 the words $ba,caba$ are right-special and the
words $ab, abac$ are left-special.
The graphs $G(\varepsilon),G(a)$ and $G(aba)$ are shown in
Figure~\ref{figureTribonacci}.
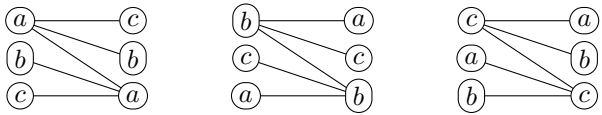
\begin{figure}[hbt]
\centering
\gasset{Nadjust=wh,AHnb=0}
\begin{picture}(60,10)
\put(0,0){
\begin{picture}(15,10)
\node(a)(0,10){$a$}\node(b)(0,5){$b$}\node(c)(0,0){$c$}
\node(c')(15,10){$c$}\node(b')(15,5){$b$}\node(a')(15,0){$a$}

\drawedge(a,a'){}\drawedge(a,b'){}\drawedge(a,c'){}
\drawedge(b,a'){}\drawedge(c,a'){}
\end{picture}
}
\put(30,0){
\begin{picture}(15,10)
\node(b)(0,10){$b$}\node(c)(0,5){$c$}\node(a)(0,0){$a$}
\node(a')(15,10){$a$}\node(c')(15,5){$c$}\node(b')(15,0){$b$}

\drawedge(b,a'){}\drawedge(b,b'){}\drawedge(b,c'){}
\drawedge(c,b'){}\drawedge(a,b'){}
\end{picture}
}
\put(60,0){
\begin{picture}(15,10)
\node(c)(0,10){$c$}\node(a)(0,5){$a$}\node(b)(0,0){$b$}
\node(a')(15,10){$a$}\node(b')(15,5){$b$}\node(c')(15,0){$c$}

\drawedge(c,a'){}\drawedge(c,b'){}\drawedge(c,c'){}
\drawedge(a,c'){}\drawedge(b,c'){}
\end{picture}
}
\end{picture}
\caption{The graphs $G(\varepsilon),G(a)$ and $G(aba)$ in the Tribonacci
  set.}
\label{figureTribonacci}
\end{figure}
One sees easily that it not possible to find two orders on $A$ making
the three graphs planar. 
\end{example}
\subsection{Interval exchange sets and planar tree sets}\label{sectionIntervalPlanar}
The following result is proved in~\cite{FerencziZamboni2008}
with a converse (see below).
\begin{proposition}\label{propositionExchangeTreeCondition}
Let $T$ be an interval exchange transformation on $A$
ordered by $<_1$ and $<_2$.
If $T$ is regular, the set $F(T)$ is a planar tree set
with respect to $<_2$ and $<_1$.
\end{proposition}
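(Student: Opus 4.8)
The plan is to show that for a regular interval exchange transformation $T$ on $A$ ordered by $<_1$ and $<_2$, and any $w \in F(T)$, the extension graph $G(w)$ is a tree compatible with the orders $<_2, <_1$ (in that role-swapped sense). The two things to establish are (i) planarity/compatibility, and (ii) that $G(w)$ is connected and acyclic. For compatibility, the key observation is the dictionary already built in Section~\ref{sectionIntervalExchange}: for $w \in F(T)$ and a letter $a$, one has $awb \in S$ iff $I_{awb} \neq \emptyset$, and $I_{awb} = I_a \cap T^{-|aw|+ \cdots}(\cdots)$. More usefully, I would use the intervals $J_{aw}$ and $I_{wb}$: a pair $(a,b)$ is an edge of $G(w)$ exactly when the semi-interval $T^{|w|}(I_{aw}) = J_{aw} \cap \big(\text{translate of } I_{wb}\big)$ is nonempty, i.e. when $J_{aw}$ meets the appropriate translate of $I_{wb}$. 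Since by Proposition~\ref{propJu} the order of the $J_{aw}$ (for $a \in L(w)$) follows $<_2$ on the first letters, and by Proposition~\ref{propIu} the order of the $I_{wb}$ follows $<_1$ on the last letters, two edges $(a,b)$ and $(c,d)$ with $a <_2 c$ force the corresponding $J$-intervals to satisfy $J_{aw} \le J_{cw}$, hence $b \le_1 d$; this is exactly the compatibility condition (with the roles of $<_1, <_2$ exchanged, as stated).

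Next I would establish that $G(w)$ is connected and acyclic. Here the natural strategy is a counting argument together with a separation/overlap analysis of the intervals. Partition $[0,1[$ two ways: by the semi-intervals $I_{wb}$ (shifted appropriately, i.e. by the pieces $T^{|w|}(I_{wb})$ as $b$ ranges over $R(w)$) and by the $J_{aw}$ as $a$ ranges over $L(w)$. Both families are finite sets of semi-intervals inside $I'$ := $T^{|w|}(I_w)$, which itself is a single semi-interval. The number of edges of $G(w)$ equals the number of nonempty pairwise intersections $J_{aw} \cap (\text{piece}_{wb})$; since both are partitions of the same interval $I'$ into left-closed right-open pieces, the number of such nonempty intersections is $\Card L(w) + \Card R(w) - 1 - (\text{number of common interior cut points})$. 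The graph is a tree precisely when the edge count is $\Card L(w) + \Card R(w) - 1$ and the graph is connected, which happens iff no interior endpoint of an $I$-piece coincides with an interior endpoint of a $J$-piece. That is exactly where regularity enters: an interior endpoint of the $I$-partition of $I'$ is (a translate of) a point $T^k(\mu_j)$ and an interior endpoint of the $J$-partition is (a translate of) a point $T^\ell(\delta_i) = T^{\ell+1}(\mu_{i-1})$, so a coincidence would put two nonzero separation points on the same $T$-orbit, contradicting regularity (the idoc condition). When no such coincidence occurs, the two partitions "interleave" in a way that makes the intersection graph a path-like tree — connectedness follows because consecutive pieces along $I'$ always overlap.

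The step I expect to be the main obstacle is turning the "no coinciding cut points" fact cleanly into "the bipartite intersection graph is a tree": one has to argue both acyclicity (a cycle in the intersection graph of two interval partitions of a segment would force a coincidence of endpoints, or equivalently force some interval to be split — this is essentially the statement that the "interval overlap" bipartite graph of two partitions of an interval, with no shared interior endpoints, is a tree) and connectedness (sweeping left to right, the union of $I$-pieces and $J$-pieces seen so far stays an interval, so the graph cannot break into two components). I would prove the acyclicity by induction on $\Card L(w) + \Card R(w)$: the leftmost cut point belongs to exactly one of the two partitions (no coincidence), say it ends $I$-piece $P_1$; then $P_1$ meets only the first $J$-piece, so the vertex $P_1$ is a leaf of $G(w)$, and removing it reduces to the same situation on $I' \setminus P_1$ with one fewer $I$-piece. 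This inductive "peel a leaf" argument simultaneously gives connectedness and acyclicity, completing the proof that $G(w)$ is a tree compatible with $<_2, <_1$.
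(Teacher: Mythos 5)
Your proposal is correct and follows essentially the same route as the paper: both identify the edges of $G(w)$ with the nonempty overlaps of two ordered semi-interval partitions of a common semi-interval (the $J_{aw}$ for $a\in L(w)$ against the translates of the $I_{wb}$ for $b\in R(w)$, ordered via Propositions~\ref{propJu} and~\ref{propIu}), invoke regularity precisely to exclude a coincidence between an interior boundary of one partition and one of the other, and read off compatibility with $<_2,<_1$ from the two orderings. The only differences are cosmetic --- where you count edges and peel leaves to get the tree property, the paper exhibits the unique reduced ``staircase'' path between any two vertices of $L(w)$; and your displayed identity $T^{|w|}(I_{aw})=J_{aw}\cap(\cdots)$ should read $T^{|aw|}(I_{awb})=J_{aw}\cap T^{|w|}(I_{wb})$, a slip that does not affect the argument.
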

\begin{proof}
Assume that $T$ is a regular interval exchange transformation relative
to $(I_a,\alpha_a)_{a\in A}$ and
let $S=F(T)$.

Since $T$ is minimal, $w$ is in $S$ if and only if $I_w\ne\emptyset$.
Thus, one has 
\begin{enumerate}
\item[(i)] $b\in R(w)$ if and only if $I_w\cap T^{-|w|}(I_b)\ne\emptyset$
and 
\item[(ii)]$a\in L(w)$ if and only if $J_a\cap
I_{w}\ne\emptyset$.
\end{enumerate}
Condition (i) holds because $I_{wb}=I_w\cap T^{-|w|}(I_b)$ and condition
(ii) because $I_{aw}=I_a\cap T^{-1}(I_w)$, which implies
$T(I_{aw})=J_a\cap I_w$.
In particular, (i) implies that $(I_{wb})_{b\in R(w)}$ is an ordered
partition
of $I_w$ with respect to $<_1$.

We say that a path in a graph is \emph{reduced} if it does not use
consecutively the same edge.
For $a,a'\in L(w)$ with $a<_2 a'$, there is a unique reduced
path in $G(w)$ from $a$ to
$a'$ which is the sequence
 $a_1,b_1,\ldots a_n$ with $a_1=a$ and $a_n=a'$ with
$a_1<_2a_2<_2\cdots<_2a_n$, $b_1<_1b_2<_1\cdots <_1 b_{n-1}$ and
$J_{a_{i}}\cap I_{wb_i}\ne\emptyset$, $J_{a_{i+1}}\cap
I_{wb_{i}}\ne\emptyset$
for $1\le i\le n-1$ (see Figure~\ref{figG(w)}). Note that
the hypothesis that $T$ is regular is needed here since otherwise
the right boundary of $J_{a_i}$ could be the left boundary of
$I_{wb_i}$. Thus $G(w)$ is a tree.
It is compatible with $<_2,<_1$ since the above shows that $a<_2 a'$ 
implies that the letters $b_1,b_{n-1}$ such that
$(a,b_1),(a',b_{n-1})\in E(w)$
satisfy $b_1\le_1 b_{n-1}$.
\end{proof}
\begin{figure}[hbt]
\centering
\gasset{Nadjust=wh,AHnb=0}
\begin{picture}(120,12)(-10,0)
\node(H1)(0,10){}
\node[Nframe=n](HB2)(5,10){}\node(H2)(20,10){}\node(H3)(30,10){}\node(H4)(50,10){}
\node(H5)(80,10){}\node(H6)(100,10){}

\node(B1)(-10,0){}\node(B2)(5,0){}\node(B3)(15,0){}
\node[Nframe=n](BH2)(20,0){}\node[Nframe=n](BH3)(30,0){}\node(B4)(35,0){}
\node(B5)(65,0){}\node[Nframe=n](BH5)(80,0){}\node(B6)(85,0){}\node(B7)(95,0){}
\node[Nframe=n](BH6)(100,0){}\node(B8)(110,0){}

\drawedge(H1,H2){$I_{wb_1}$}\drawedge(H3,H4){$I_{wb_2}$}
\drawedge(H5,H6){$I_{w_{b_{n-1}}}$}

\drawedge(B1,B2){$J_{a_1}$}\drawedge(B3,B4){$J_{a_2}$}\drawedge(B5,B6){$J_{a_{n-1}}$}\drawedge(B7,B8){$J_{a_n}$}

\drawedge[dash={0.2 0.5}0](B2,HB2){}\drawedge[dash={0.2 0.5}0](BH2,H2){}
\drawedge[dash={0.2 0.5}0](BH3,H3){}
\drawedge[dash={0.2 0.5}0](BH5,H5){}\drawedge[dash={0.2 0.5}0](BH6,H6){}
\end{picture}
\caption{A path from $a_1$ to $a_n$ in $G(w)$.}\label{figG(w)}
\end{figure}
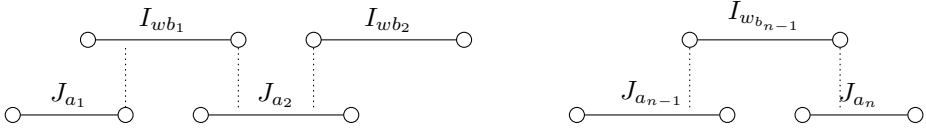

By Proposition~\ref{propositionExchangeTreeCondition}, a regular interval 
exchange set is a planar tree set, and thus in particular a tree set.
Note that the analogue of Theorem~\ref{corollaryInverseImage} holds
for the class of uniformly recurrent tree sets~\cite{BertheDeFeliceDolceLeroyPerrinReutenauerRindone2013c}.

The main result of~\cite{FerencziZamboni2008} states that a 
uniformly recurrent set
$S$ on an alphabet $A$ is a regular interval exchange set if and only
if
$A\subset S$ and
there exist two orders $<_1$ and $<_2$ on $A$ such that the
following conditions are satisfied for any word $w\in S$.
\begin{enumerate}
\item[(i)] The set $L(w)$ (resp. $R(w)$)
is formed of consecutive elements for the order $<_2$ (resp. $<_1$).
\item[(ii)] For $(a,b),(c,d)\in E(w)$, if $a<_2 c$, then $b\le_ 1 d$.
\item [(iii)] If $a,b\in L(w)$ are consecutive for the order $<_2$, then
the set $R(aw)\cap R(bw)$ is a singleton.
\end{enumerate}
It is easy to see that a biextendable set $S$ containing $A$ satisfies (ii) and (iii)
if and only if it is a planar tree set. Actually, in this
case, it
automatically
satisfies also condition (i). Indeed, let us consider a word $w$
and $a,b,c\in A$ with $a<_1 b<_1 c$ such that $wa,wc\in S$ but
$wb\notin S$. Since $b\in S$ there is a (possibly empty) suffix $v$ of $w$ such that
$vb\in S$. We choose $v$ of maximal length. Since $wb\notin S$, we
have $w=uv$ with $u$ nonempty. Let $d$ be the last letter of $u$.
Then we have $dva,dvc\in S$ and $dvb\notin S$. Since $G(v)$
is a tree and $b\in R(v)$, there is a letter $e\in L(v)$ such
that $evb\in S$. But $e<_2d$ and $d<_2 e$ are both impossible
since $G(v)$ is compatible with $<_2$ and $<_1$. Thus we reach
a contradiction.

This shows that the following reformulation of the main result
of~\cite{FerencziZamboni2008} is equivalent to the original one.
\begin{theorem}[Ferenczi, Zamboni]\label{theoremFZ}
A set $S$ is a regular interval exchange set on the alphabet $A$ if and only if
it is a uniformly recurrent planar tree set containing $A$.
\end{theorem}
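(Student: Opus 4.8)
\textbf{Proof strategy for Theorem~\ref{theoremFZ}.}
The plan is to establish the equivalence by combining the two directions that have essentially already been prepared in the preceding text. For the forward implication, suppose $S$ is a regular interval exchange set on $A$. By definition $S=F(T)$ for a regular interval exchange transformation $T$, and by Proposition~\ref{propositionRegularUR} (regularity implies minimality by Keane's Theorem~\ref{theoremKeane}) the set $S$ is uniformly recurrent. Since $T$ is minimal, every letter of $A$ occurs in $S$, so $A\subset S$. Finally, Proposition~\ref{propositionExchangeTreeCondition} shows that $S$ is a planar tree set with respect to the pair of orders $<_2,<_1$ attached to $T$. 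Thus all four required properties hold.

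For the converse, suppose $S$ is a uniformly recurrent planar tree set containing $A$, with respect to orders $<_1$ and $<_2$. The bridge to the Ferenczi--Zamboni characterization is the observation, argued in detail in the paragraph preceding the theorem: a biextendable set $S\supseteq A$ that is a planar tree set automatically satisfies conditions (i), (ii), (iii) of~\cite{FerencziZamboni2008}. Indeed (ii) is exactly compatibility of each $G(w)$ with the two orders, (iii) follows because $G(w)$ being a tree forces the reduced path structure (so $R(aw)\cap R(bw)$ is a singleton for consecutive $a,b\in L(w)$), and (i)---that $L(w)$ and $R(w)$ are intervals for the respective orders---follows from the maximality-of-suffix argument given in the excerpt: if $wa,wc\in S$ but $wb\notin S$ for $a<_1b<_1c$, one descends to a suffix $v$ of $w$ with $vb\in S$ of maximal length, picks the preceding letter $d$, and derives a contradiction with compatibility of $G(v)$. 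Having verified (i)--(iii), we invoke the original theorem of~\cite{FerencziZamboni2008} to conclude that $S$ is a regular interval exchange set.

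The one point that genuinely needs care---and where I expect the main obstacle to lie---is checking that the reformulation is faithful, i.e.\ that "planar tree set containing $A$" is not merely implied by but is equivalent to the Ferenczi--Zamboni hypotheses. The forward direction of this equivalence (planar tree $\Rightarrow$ (i)--(iii)) is the content of the paragraph above; the reverse direction (conditions (i)--(iii) $\Rightarrow$ $G(w)$ is a tree compatible with the orders) requires showing that connectedness and acyclicity of each extension graph follow from the combinatorial conditions. Connectedness is the delicate half: one uses (i) and (iii) to build, for any two vertices in $L(w)$, a path through consecutive letters via the singleton sets $R(aw)\cap R(bw)$; acyclicity then follows from (ii), since a cycle in a graph drawn with noncrossing straight segments would force two edges to coincide. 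Compatibility is just a restatement of (ii). Once both directions of the set-theoretic equivalence are in hand, the theorem is immediate from~\cite{FerencziZamboni2008}. In the write-up I would state the equivalence "(i)--(iii) $\iff$ planar tree set containing $A$" explicitly as the key lemma, prove the two implications, and then deduce Theorem~\ref{theoremFZ} in one line.
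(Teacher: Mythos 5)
Your proposal is correct and follows essentially the same route as the paper: the paper likewise treats the theorem as a reformulation of the Ferenczi--Zamboni result, asserting that conditions (ii)--(iii) are equivalent to the planar tree condition, proving in detail only that (i) then comes for free via the maximal-suffix argument, and using Proposition~\ref{propositionExchangeTreeCondition} for the direct implication. Your explicit isolation of the equivalence ``(i)--(iii) $\iff$ planar tree set containing $A$'' as a lemma, with the connectedness/acyclicity details spelled out, is a slightly more careful write-up of what the paper dismisses as ``easy to see,'' but it is not a different proof.
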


We have already seen that the Tribonacci set is a tree set
which is not a planar tree set (Example~\ref{exampleTribonacciPasPlanaire}).
The next example shows that there are uniformly recurrent tree sets
which are neither Sturmian nor regular interval exchange sets.
\begin{example}
Let $S$ be the Tribonacci set on the alphabet $A=\{a,b,c\}$
and let $f:\{x,y,z,t,u\}^*\rightarrow A^*$ be the coding
morphism for $X=S\cap A^2$ defined by
$f(x)=aa$, $f(y)=ab$, $f(z)=ac$, $f(t)=ba$, $f(u)=ca$.
By Theorem 7.1
in~\cite{BertheDeFeliceDolceLeroyPerrinReutenauerRindone2013c},
the set $W=f^{-1}(S)$ is a uniformly recurrent tree set.
It is not Sturmian since $y$ and $t$ are two right-special words.
It is not either a regular
interval exchange set. Indeed, for any right-special word $w$ of $W$,
one has $\Card(R(w))=3$. This is not possible in a regular interval
exchange set $T$ since, $\Sigma_T$ being injective, the length of
the interval $J_w$ tends to $0$ as $|w|$ tends to infinity
and it cannot contain several separation points.
It can of course also be verified directly that $W$ is not a planar tree set.
\end{example}
\subsection{Exchange of pieces}\label{sectionExchangePieces}
In this section, we show how one can define a generalization
of interval exchange transformations called exchange of
pieces. In the same way as interval exchange is a generalization
of rotations on the circle, exchange of pieces is a generalization
of rotations of the torus. We begin by studying this direction
starting from the Tribonacci word. For more on the Tribonacci
word, see~\cite{Rauzy1979} and also~\cite[Chap. 10]{Lothaire2005}.

\paragraph{The Tribonacci shift}
The Tribonacci set $S$ is not an interval exchange set  but it is however   
the  natural coding of  another type of  geometric 
transformation, namely an exchange of pieces in the plane, 
 which  is also     a translation acting  on the two-dimensional torus 
${\mathbb T}^2$. This will allow us to show   that 
 the   decoding of the Tribonacci word with respect to    a coding 
morphism  for   a  finite $S$-maximal  bifix code is again  
a  natural coding of an exchange of pieces.
 
 The \emph{Tribonacci shift} 
is the symbolic dynamical system $(M_x,\sigma)$, where 
$M_x = \overline{\{\sigma^n(x) :\, n \in \mathbb{N}\}}$ is the closure of the $\sigma$-orbit of~$x$ where $x$ is the Tribonacci word.
By uniform recurrence of the Tribonacci word,  $(M_x,\sigma)$  is minimal 
and $M_{x} = M_{y}$ for each $y \in M_x$ 
(\cite[Proposition~4.7]{Queffelec2010}). 
The Tribonacci  set is the set of factors of the 
Tribonacci  shift $(M_x, \sigma)$.

 \paragraph{Natural coding}
Let $\Lambda$ be a
full-rank lattice in $\mathbb{R}^d$. We say that an infinite word
$x$ is a {\em natural coding} of a  toral translation $T_\mathbf{t}: {\mathbb R}^d/\Lambda   \rightarrow {\mathbb R}^d/\Lambda, \  \mathbf{x} \mapsto \mathbf{x} + \mathbf{t}$  if there exists a fundamental domain $R$ for $\Lambda$ together with a partition $R=R_1\cup\cdots\cup R_k$
such that on each $R_i$ ($1\le i\le k$), there exists a vector ${\mathbf t}_i$ such that  the map $T_\mathbf{t}$ is   given by  the translation along ${\mathbf t}_i$, and $x$ is the coding of a point $\mathbf{x}\in R$ with respect to this partition. A symbolic dynamical system $(M,\sigma)$ is a 
{\em natural coding} of $({\mathbb R}^d/\Lambda,T_\mathbf{t} )$ 
 if  every element of $M$ is a natural  coding of the orbit of some point of the $d$-dimensional torus ${\mathbb R}^d/\Lambda$
(with respect to the same  partition) 
and if, furthermore, $(M,\sigma)$ and $({\mathbb R}^d/\Lambda,T_\mathbf{t} )$ 
are measurably conjugate.

\paragraph{Definition of the Rauzy fractal} Let $\beta$ stand for the  
Perron-Frobenius  eigenvalue of the Tribonacci substitution.
It is the largest root of $z^3-z^2-z-1$. Consider the translation 
$R_{\beta}\colon {\mathbb T}^2 \rightarrow {\mathbb T }^2,$
$x \mapsto x+(1/\beta,1/\beta ^2)$.
Rauzy introduces  in  \cite{Rauzy1982} a fundamental domain for a  
two-dimensional lattice, called  the 
Rauzy fractal (it has indeed  fractal boundary), which  provides     a partition for 
  the symbolic dynamical system $(M_x,\sigma)$   to  be a natural
coding   for  $R_{\beta}$.  The Tribonacci word  is a natural coding  of  the orbit of 
the point $0$ under the action of the toral translation  in
${\mathbb T}^2$: $x \mapsto x +(\frac{1}{\beta}, \frac{1}{\beta^2})$.
Similarly as in the case of interval exchanges, we have  the following  commutative diagram
 \begin{displaymath}
\begin{array}{ccc}
\GT^2 &\stackrel{R_\beta}
{\longrightarrow}&\GT^2
\\
\Big\downarrow&&\Big\downarrow\\\
M_x&\stackrel{\sigma}{\longrightarrow}&M_x
\end{array}
\end{displaymath}

The {\em Abelianization map}  ${\bf f}$ of the
  free monoid  $\{1,2,3\}^*$  is  defined by
  ${\bf f}:\{1,2,3\}^{*}\rightarrow \Z^{3}, \
{\bf f}(w)=|w|_{1} { e}_{1}+ |w|_{2} { e}_{2}+|w|_{3} { 
e}_{3},$
  where  $|w|_i$ denotes the number of occurrences of the
  letter $i$ in the word $w$, and  $({ e}_{1},{ e}_{2},{ e}_{3})$ 
stands for the canonical
  basis of ${\mathbb R}^{3}$.

Let $f$ be the morphism $a\mapsto ab, b\mapsto ac,c\mapsto a$
such that the Tribonacci word is the fixpoint of $f$
(see Example~\ref{exampleTribonacci}).
 The   incidence  matrix $F$ of $f$  is defined   by
$ F = \left(|f(j)|_i \right)_{(i,j) \in {\mathcal A}^2},
$
where $|f(j)|_i$  counts the number of  occurrences of the letter  $i$ in  $f(j)$.
One has 
$  F=    \left[
\begin{array}{lll}
1&1&1\\
1&0&0\\
0&1&0
\end{array}
\right].$
The incidence matrix $F$ admits 
as eigenspaces in ${\mathbb R}^3$
one  {\em expanding eigenline}  (generated by the eigenvector with positive coordinates
${ v}_{\beta}=(1/\beta, 1/\beta^2,1/\beta ^3)$ associated with the eigenvalue $\beta$).
 We consider the  projection $\pi$ onto the  antidiagonal plane $ x+y+z=0 $  along the expanding direction of the matrix $F$.

One   associates with the Tribonacci
  word $x=(x_{n})_{n \geq 0}$ a broken line starting from $0$ in
  $\Z^{3}$ and approximating the expanding line
${ v}_{\beta}$
 as follows.  The {\em Tribonacci broken line}
is defined as the broken line which joins
with segments of length  $1$
 the  points
${\bf f}(x_0x_1\cdots x_{n-1}), \ n \in {\mathbb N}$.
In other words we describe this broken line by starting from
the origin, and then  by reading successively
the letters of the Tribonacci word
$x$,  going one step in direction
$e_i$ if one reads  the letter $i$.
The vectors ${\bf f}(x_0x_1\cdots x_n)$, $n \in {\mathbb N}$,
 stay within bounded distance of the
expanding
  line (this comes from the fact that $\beta$ is a Pisot number).
  The closure of the set of projected  vertices of the broken line is called the {\em Rauzy fractal} and is
 represented on Figure \ref{fig:RF}.
We    thus define the Rauzy fractal  ${\mathcal R}$ as
  \begin{displaymath}
{\mathcal R}:=
  \overline {\{ \pi ({\bf f}(x_{0}\cdots x_{n-1})); \ n \in \N \}},
\end{displaymath}
where $x_{0}\dots x_{n-1}$ stands  for the empty word when $n=0$.

\begin{figure}[h]
\begin{center}
  \includegraphics[width=.45\textwidth]{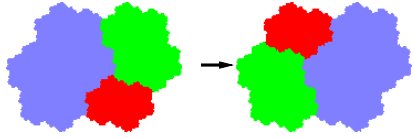}
\caption{The Rauzy fractal } \label{fig:RF}
\end{center}
\end{figure}

The  Rauzy fractal is divided into three pieces, for $i= \{1,2,3\}$
 \begin{eqnarray*}  
{\mathcal R}(i)&:=&
  \overline {\{ \pi ({\bf f}(x_{0}\cdots x_{n-1})); \  x_{n}=i, n \in \N \}},\\
   {\mathcal R}'(i)&:=&
  \overline {\{ \pi ({\bf f}(x_{0}\cdots x_{n})); \  x_{n}=i, n \in \N \}}.
\end{eqnarray*}
 It has been proved in \cite{Rauzy1982} that  these pieces have  non-empty interior  and are   disjoint up to a set of zero measure.
  The following exchange of pieces
 $E$ is thus  well-defined 
\begin{displaymath}
E:\mbox{Int }{\mathcal R}_1\cup
\mbox{Int }{\mathcal R}_2
\cup
\mbox{Int }{\mathcal R}_3
\rightarrow 
{\mathcal R} ,\ 
x \mapsto x+\pi({e}_i), \mbox{ when } x \in\mbox{Int }{\mathcal R}_i.
\end{displaymath}
One has  $E({\mathcal R}_i)={\mathcal R}'_i$, for all $i$.

We consider  the lattice $\Lambda$  generated  by the  vectors  $\pi({e}_i)- \pi({e}_j)$, for $i \neq j$.
The   Rauzy fractal tiles periodically the plane, that is,  $\cup_{ \gamma \in \Lambda} \gamma + {\mathcal R}$
is equal to the   plane $x+y+z=0$, and
for $\gamma \neq \gamma ' \in \Lambda$,  $\gamma + {\mathcal R}$   and $\gamma' + {\mathcal R}$ do not intersect (except on a set of zero measure).
This is why  the exchange of pieces is  in fact  measurably conjugate to the translation $R_{\beta}$.
Indeed the vector
of coordinates of 
$\pi({\bf f}(x_0x_1\cdots x_{n-1}))$ in the basis
  $(\pi(e_3)-\pi(e_1),\pi(e_3)-\pi(e_2))$
of the plane $x+y+z=0$   is  $ n \cdot(1/\beta, 1/\beta ^2) -
(|x_0x_1 \cdots x_{n-1}|_1,|x_0x_1 \cdots x_{n-1}|_2).$
Hence   the coordinates
of $E^n (0) $
in the  basis
$(_pi(e_3)-\pi(e_1),_pi(e_3)-_pi(e_2))$
are
equal  to
$R_{\beta}^n (0 )$ modulo ${\mathbb Z}^2$.

 \paragraph{Bifix codes and exchange of pieces}
Let
$(\RR_a)_{a\in A}$ and $(\RR'_a)_{a\in A}$ be two families
of subsets of a compact set $\RR$ incuded in $\R^d$. We assume that the 
families $(\RR_a)_{a\in A}$ and  $(\RR'_a)_{a\in A}$ both form a partition of $\RR$ up to
a set of zero measure. We assume that there exist vectors $e_a$
such that $\RR'_a=\RR_a+e_a$ for any $a\in A$.
The exchange of pieces associated with these
data is the map $E$ defined on $\RR$ (except a set of measure zero)
by $E(z)=z+e_a$ if $z\in\RR_a$. The notion of natural coding of
an exchange of pieces extends here in a natural way.

Assume that $E$ is an exchange of pieces as defined above.
Let $S$ be the set of factors of the natural codings of $E$.
We assume that $S$ is uniformly recurrent.

  By analogy with the case of interval exchanges, let 
  $I_{a}={\mathcal R}_a$ and let $J_a=E ({\mathcal R}_a)$.
   For a word $w \in A^{*}$, one defines  similarly as for interval exchanges $I_w$ and $J_w$.
 
Let $X$ be a finite $S$-maximal   prefix code. The family $I_w$, $w \in X$,  is a partition (up to sets of zero measure) of  ${\mathcal R}$.
 If $X$ is a finite $S$-maximal   suffix code, then the family $J_w$ is a partition (up to sets of zero measure) of  ${\mathcal R}$.
 Let $f$  be a coding morphism for $X$.
 If $X$ is a  finite $S$-maximal   bifix code, then $E_X$  is the exchange of pieces $E_f$ (defined as for interval exchanges), hence the decoding of $x$  with respect to $f$
 is the natural coding  of the exchange of pieces  associated with $f$.
In particular, $S$ being the Tribonacci set, the decoding of $S$
 by a finite $S$-maximal bifix code is again the natural coding of
an exchange of pieces.
 If $X$ is the set of factors of  length $n$ of $S$, then 
 $E_f$ is in fact  equal to $R^n _{\beta}$ (otherwise, there is no reason for  this exchange of pieces   to be a  translation).
 The analogues of Proposition~\ref{propositionTransformation}  and \ref{propositionDecoding}  thus hold here also.

\subsection{Subgroups of finite index}\label{sectionSkew}
We denote by $F_A$ the free group on the set $A$.

Let $S$ be a recurrent set containing the alphabet $A$. We say that
$S$ has the \emph{finite index basis property} if the following holds:
a finite bifix code $X\subset S$ is an $S$-maximal bifix code
of $S$-degree $d$
if and only if it is a basis of a subgroup of index $d$ of $F_A$.

The following is a 
consequence of the main result of \cite{BertheDeFeliceDolceLeroyPerrinReutenauerRindone2014}. 

\begin{theorem}\label{theoremBasis}
A regular interval exchange set
 has the finite index basis property.
\end{theorem}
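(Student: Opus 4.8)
The plan is to deduce this statement from the main result of~\cite{BertheDeFeliceDolceLeroyPerrinReutenauerRindone2014}, which establishes the finite index basis property for a broad class of sets — namely, the uniformly recurrent tree sets (or whatever the precise hypothesis there is). The key observation is that we have already assembled, earlier in the paper, exactly the structural facts needed to place regular interval exchange sets inside that class. Concretely, Proposition~\ref{propositionRegularUR} (together with the remarks following it) tells us that for a regular — hence minimal — interval exchange transformation $T$, the set $F(T)$ is uniformly recurrent, and by definition a regular interval exchange set contains the alphabet $A$. Moreover, Proposition~\ref{propositionExchangeTreeCondition} shows that $F(T)$ is a planar tree set with respect to the two orders $<_2$ and $<_1$, and in particular a tree set: for every $w\in F(T)$ the extension graph $G(w)$ is a tree.

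First I would recall the precise formulation of the hypothesis under which~\cite{BertheDeFeliceDolceLeroyPerrinReutenauerRindone2014} proves the finite index basis property. Then I would verify, one by one, that a regular interval exchange set $S=F(T)$ satisfies each ingredient of that hypothesis: recurrence (indeed uniform recurrence, from Proposition~\ref{propositionRegularUR}, since a uniformly recurrent set is recurrent); containment of $A$ (immediate, since each $I_a$ is a nonempty semi-interval and $T$ is minimal, so every letter lies in $F(T)$); and the tree condition on all extension graphs (Proposition~\ref{propositionExchangeTreeCondition}, discarding the planarity refinement, which is not needed here). With these checks in place, the theorem of~\cite{BertheDeFeliceDolceLeroyPerrinReutenauerRindone2014} applies verbatim and yields the conclusion: a finite bifix code $X\subset S$ is an $S$-maximal bifix code of $S$-degree $d$ if and only if it is a basis of a subgroup of index $d$ of the free group $F_A$.

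The main obstacle here is not mathematical depth but bookkeeping: making sure the hypotheses of the cited theorem match exactly what we have proved, and in particular that no additional assumption (such as neutrality of letters, closure under reversal, or a bound on the number of extensions) is being used there that would fail for general regular interval exchange sets. Since Proposition~\ref{propositionExchangeTreeCondition} gives the full tree property — the strongest of the extension-graph conditions — and uniform recurrence is automatic by minimality, I expect this matching step to go through cleanly, and the proof reduces to a short citation once the hypotheses are lined up.
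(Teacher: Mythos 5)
Your proposal matches the paper's own proof: regularity gives uniform recurrence (via minimality and Proposition~\ref{propositionRegularUR}), Proposition~\ref{propositionExchangeTreeCondition} gives the tree property, and the Finite Index Basis Theorem for uniformly recurrent tree sets from~\cite{BertheDeFeliceDolceLeroyPerrinReutenauerRindone2014} then applies. This is essentially identical to the argument in the paper.
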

\begin{proof}
Let $T$ be a regular interval exchange transformation and let $S=F(T)$.
Since $T$ is regular, $S$ is uniformly recurrent and
by Proposition~\ref{propositionExchangeTreeCondition}, it
 is a  tree set.
By Theorem 4.4 in~\cite{BertheDeFeliceDolceLeroyPerrinReutenauerRindone2014},
 a uniformly recurrent tree set has the finite index basis property,
and thus the conclusion follows.
\end{proof}
Note that Theorem~\ref{theoremBasis} implies in particular that
if $T$ is a regular $s$-interval exchange set and if $X$ is
a finite $S$-maximal bifix code of $S$-degree $d$, then
$\Card(X)=d(s-1)+1$. Indeed, by Schreier's Formula a basis
of a subgroup of index $d$ in a free group of rank $s$ has
$d(s-1)+1$ elements.

We use Theorem~\ref{theoremBasis} to give another proof of Theorem
\ref{theoremMinimal}. For this, we recall the following notion.

Let $T$ be an interval exchange transformation on $I=[0,1[$
relative to $(I_a)_{a\in A}$.
Let $G$ be a transitive permutation group on a finite set $Q$.
Let $\varphi:A^*\rightarrow G$ be a morphism and let $\psi$ be
the map from $I$ into $G$ defined by $\psi(z)=\varphi(a)$
if $z\in I_a$.
The \emph{skew product} of $T$ and $G$ is the transformation $U$
on $I\times Q$ defined by
\begin{displaymath}
U(z,q)=(T(z),q\psi(z))
\end{displaymath}
(where $q\psi(z)$ is the result of the action of the permutation
$\psi(z)$ on $q\in Q$).
Such a transformation is equivalent to an interval exchange transformation 
via the identification of $I\times Q$ with an interval obtained
by placing the $d=\Card(Q)$ copies of $I$ in sequence. This is called an
\emph{interval exchange transformation on a stack} in~\cite{BoshernitzanCarroll1997} (see also~\cite{Veech1975}). If $T$ is regular, then $U$ is also regular.

Let $T$ be a regular interval exchange transformation and
let $S=F(T)$. Let $X$ be a finite $S$-maximal bifix code of $S$-degree
$d=d_X(S)$.
By Theorem~\ref{theoremBasis}, $X$ is a basis of a subgroup $H$
of index $d$ of $F_A$. Let $G$ be the representation
of $F_A$ on the right cosets of $H$ and let $\varphi$
be the natural morphism from $F_A$ onto $G$. We identify
the right cosets of $H$ with the set $Q=\{1,2,\ldots,d\}$
with $1$ identified to $H$. Thus $G$ is a transitive permutation
group on $Q$ and $H$ is the inverse image by $\varphi$
of the permutations fixing $1$.

The transformation induced by the skew product
$U$ on $I\times\{1\}$ is clearly equivalent
to the transformation $T_f=T_X$ where $f$ is a coding morphism for the
$S$-maximal bifix code $X$. Thus $T_X$ is a regular interval exchange
transformation.

\begin{example}
Let $T$ be the rotation of Example~\ref{exampleFibonacciAlpha}. Let
$Q=\{1,2,3\}$ and let $\varphi$ be the morphism from $A^*$ into
the symmetric group on $Q$ defined by $\varphi(a)=(23)$ and $\varphi(b)=(12)$.
The transformation induced by the skew product of $T$ and $G$
on $I\times\{1\}$
corresponds to the bifix code $X$ of Example~\ref{exampleBifixmaxDegree3}.
For example, we have $U:(1-\alpha,1)\rightarrow (0,2)\rightarrow (\alpha,3)
\rightarrow (2\alpha,2)\rightarrow (3\alpha-1,1)$ 
(see Figure~\ref{figureSkew}) and the
corresponding word of $X$ is $baab$.
\begin{figure}[hbt]
\centering\gasset{Nh=2,Nw=2,ExtNL=y,NLdist=2,AHnb=0,ELside=r}
\begin{picture}(100,35)
\node[fillgray=0.1](0H)(0,30){$(0,3)$}
\node[Nadjust=n,fillgray=0.1](alpha)(38.2,30){$(\alpha,3)$}
\node[fillgray=.6](1-alphaH)(61.8,30){$(1-\alpha,3)$}
\node(1H)(100,30){$(1,3)$}
\drawedge[linegray=0.1,linewidth=1](0H,1-alphaH){$a$}
\drawedge[linegray=0.6,linewidth=1](1-alphaH,1H){$b$}

\node[fillgray=0.1](0h)(0,15){$(0,2)$}
\node[fillgray=0.6](1-alphah)(61.8,15){$(1-\alpha,2)$}
\node[Nadjust=n,fillgray=0.6](2alpha)(76.4,15){$(2\alpha,2)$}
\node(1h)(100,15){$(1,2)$}
\drawedge[linegray=0.1,linewidth=1](0h,1-alphah){$a$}
\drawedge[linegray=0.6,linewidth=1](1-alphah,1h){$b$}

\node[fillgray=0.1](0b)(0,0){$(0,1)$}
\node[Nadjust=n,fillgray=0.1](3alpha-1)(14.1,0){$(3\alpha-1,1)$}
\node[fillgray=0.6](1-alpha)(61.8,0){$(1-\alpha,1)$}
\node(1b)(100,0){$(1,1)$}
\drawedge[linegray=0.1,linewidth=1](0b,1-alpha){$a$}
\drawedge[linegray=0.6,linewidth=1](1-alpha,1b){$b$}

\gasset{AHnb=1}
\drawedge(1-alpha,0h){}
\drawedge(0h,alpha){}
\drawedge(alpha,2alpha){}
\drawedge(2alpha,3alpha-1){}
\end{picture}
\caption{The transformation $U$.}\label{figureSkew}
\end{figure}
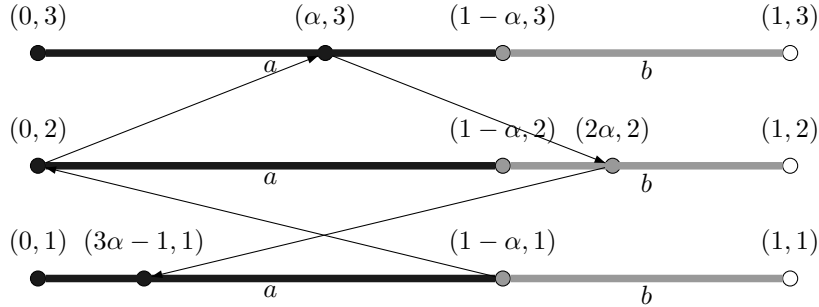
\end{example}

\bibliographystyle{plain}
\bibliography{bifixCodesIntervalExchange}

\end{document}